\documentclass{article}
\usepackage[a4paper, total={6in, 8in}]{geometry}
\usepackage{anyfontsize}
\usepackage[T1]{fontenc}
\usepackage[utf8]{inputenc}
\usepackage{amsthm}
\usepackage{amsmath}
\usepackage{amssymb}
\usepackage{mathtools}
\newtheorem{theorem}{Theorem}[section]
\newtheorem{hypothesis}[theorem]{Hypothesis}
\newtheorem{definition}[theorem]{Definition}
\newtheorem{proposition}[theorem]{Proposition}
\newtheorem{lemma}[theorem]{Lemma}
\newtheorem{remark}[theorem]{Remark}
\let\originalleft\left
\let\originalright\right
\renewcommand{\left}{\mathopen{}\mathclose\bgroup\originalleft}
\renewcommand{\right}{\aftergroup\egroup\originalright}
\newcommand{\footremember}[2]{%
   \footnote{#2}
    \newcounter{#1}
    \setcounter{#1}{\value{footnote}}%
}
\newcommand{\footrecall}[1]{%
    \footnotemark[\value{#1}]%
} 

\begin{document}
\title{The Stochastic TR-BDF2 Scheme of Order 2}
\author{Tom{\'a}s Caraballo\footremember{1}{Departamento de Ecuaciones Diferenciales y An{\'a}lisis Num{\'e}rico, Universidad de Sevilla; caraball@us.es \& macarena@us.es \& iroldan@us.es} \and Macarena G{\'o}mez-M{\'a}rmol\footrecall{1} \and Ignacio Rold{\'a}n\footrecall{1}}
\date{\today}
\maketitle
\begin{abstract}
Our main objective in this paper is to develop a second-order stochastic numerical method which generalizes the well-known deterministic TR-BDF2 scheme. Since most stochastic techniques used for approximating the solution of a stochastic differential equation may have lower order compared to the deterministic case, we have elaborated a scheme which not only preserves the second-order accuracy of the original scheme in the stochastic framework, but also its $A$-stability. Once we obtain the scheme and prove its second-order accuracy and $A$-stability, which is not a trivial task, we also state a result concerning its $MS$-stability. This concept is also analyzed for different parameter ranges in our scheme and the It{\^o}--Taylor approximation of order 2, revealing scenarios where, for certain time step sizes, the developed method is $MS$-stable while the It{\^o}--Taylor one is not. This concept is really useful to tackle slow-fast problems such as stiff ones, which we aim to explore further in future work. Finally, we validate the theoretical results with some academic test cases.
\end{abstract}
\paragraph{Subjclass:} 60H10, 65C20, 65P99.
\paragraph{Keywords:} TR-BDF2, Stochastic Differential Equations, Numerical Analysis.

\section*{Introduction}
Nowadays, the concern of researchers to simulate more and more realistic problems has led the scientific community to become interested in problems with a significant randomness component. In this line, one of the ways to take this randomness into account is to consider stochastic problems. In our case, we are interested in ordinary differential systems where white noise has been added, since it represents many real situations such as population dynamics, neuroscience behavior, chemical reactions, etc.

These problems have started to be studied from the point of view of Mathematical Analysis some years ago and we can find a wide bibliography related to these studies \cite{Buckwar2021, Hairer2010, Lord2014}. Since most of these models have no known explicit solutions, numerical analysis of SDEs is essential for attempting to study the behavior of those solutions \cite{Ruemelin1982}. Some recent important works on stochastic systems include \cite{Yuan2024, Yuan2025, Zhu2025}.

The numerical methods developed are mainly based on reproducing in some way the well-known ones for the deterministic case. Thus, for example, based on the It{\^o} analysis and the It{\^o}--Taylor expansion, deterministic methods based on Taylor's development are generalized \cite{Ito1951_1, Ito1951_2}. The best known amongst these methods are the Euler-Maruyama \cite{Cambanis1996} and the Milstein ones \cite{Maruyama1955}. Related developments in the numerical treatment of stochastic systems can be found in \cite{Kloeden2025, Li2025, Metzger2025, Printems2001}.

Other generalizations have been developed, such as the corresponding $\theta$-scheme, which is the stochastic $\theta$-Maruyama \cite{Berkolaiko2012} or a generalization of the implicit BDF2 scheme \cite{Andersson2017} and even a stochastic partial differential equation version \cite{Kruse2023}. However, these schemes are of order 0.5 in time, which might not be sufficient to obtain good precision in certain models. On the other hand, schemes with higher order have been elaborated, like Adams-Bashforth \cite{Ewald2005} with order 2, Runge-Kutta with orders up to 1.5 \cite{Burrage1996, R2010}, and approximations of weak order 2 in \cite{Ninomiya2008, R2009}.

Within the field of differential systems we are interested in studying slow-fast problems, meaning that the speeds at which the different variables move are not comparable in magnitude. This type of systems is included in what are known as stiff problems. Over the recent years, more elaborate schemes have been developed in an efficient way to tackle this kind of problems and address the challenges. An example of that is the so-called splitting techniques \cite{Berkaoui2008, Buckwar2022, Wang2010}, which have order 1 at most. However, constructing second order stochastic methods is challenging because the order may be reduced compared to the deterministic case \cite{Kloeden1992, Mao1997}.

To solve these problems properly, it is necessary to choose the time step for reasons of stability and not for reasons of accuracy. Therefore, it is important to have methods that have good numerical properties. It may happen that these properties can be achieved but with an extremely small time step which makes the method unfeasible. In this paper, we focus on the development of a scheme and the study of its stability properties in the scalar case, which provides a natural framework in which key ideas can be explored and validated. In \cite{Buckwar2010}, a stability study is conducted in the context of a system; this setting is likewise of significant interest and will be investigated in subsequent studies.

Our aim in this paper is to generalize the deterministic TR-BDF2 method to the stochastic framework, so that it maintains all the good properties and in particular the order 2 of convergence.

The TR-BDF2 in deterministic version has been studied for example in \cite{Bonaventura2017, Hosea1996, Bank1985}. This method has been used in many fields of science with good results.

The remainder of the paper is organized as follows. Section~\ref{sec1} contains the problem we want to approximate and the concept of order of convergence that we are going to use when approximating the problem to be solved. Section~\ref{s3} is devoted to a short summary of the It{\^o} analysis, the It{\^o}--Taylor expansion and the usual numerical methods that are obtained from the expansion. Sections~\ref{s4} and \ref{s5} actually contain the bulk of the work. In the first one, the method is constructed and the convergence order is studied from a theoretical point of view, following an argument similar to that in \cite{Ewald2005}, but adapted to our setting, which involves a non-uniform partition and leverages the implicitness of the scheme, among other things. Section~\ref{s5} is devoted to the analysis of the numerical stability of the constructed method, a very important property to attack stiff problems. Finally, in Section~\ref{numerical}, some validation tests of the theoretical results presented in the previous sections are shown.

\section{Problem Formulation}\label{sec1}
In this first section, we present the continuous problem for which we are interested in developing numerical methods that allow us to approximate its solution. We will also define the concept of numerical method of convergence order $p$ for the stochastic case.

Let $\left(\Omega, \mathcal{F}, \left\{\mathcal{F}_{t}\right\}_{t \geq 0}, P\right)$ be a complete probability space with the natural filtration $\left\{\mathcal{F}_{t}\right\}_{t \geq 0}$. Consider a $d$-dimensional stochastic differential equation (SDE)
\begin{equation}
\left\{\begin{array}{ll}
\displaystyle {\rm d}X_{t} = a\left(t, X_{t}\right){\rm d}t + \sum_{j = 1}^{m}b^{j}\left(t, X_{t}\right){\rm d}W_{t}^{j}, & \displaystyle t \in \left[0, T\right],\\
\displaystyle X_{0} \text{ is given}, & \displaystyle
\end{array}\right.
\label{est}
\end{equation}
\noindent where $T$ is a positive constant, $a : \left[0, T\right] \times \mathbb{R}^{d} \to \mathbb{R}^{d}$, $b^{j} : \left[0, T\right] \times \mathbb{R}^{d} \to \mathbb{R}^{d}$, with $j = 1, \ldots, m$ and $X_{0}$ is the initial data. The processes $W_{t}^{j}$ are independent scalar Wiener processes adapted to $\left\{\mathcal{F}_{t}\right\}_{t \geq 0}$, and the initial data $X_{0}$ is independent of them, satisfying $E\left[\left\|X_{0}\right\|^{2}\right] < \infty$, where $\left\|\cdot\right\|$ denotes the Euclidean norm.

We now present a result establishing the existence and uniqueness of the solution to the above problem \cite{Kloeden1992}:

\begin{theorem}\label{eu}
Suppose that
\begin{itemize}
\item[(A.1)] $a$ and $b^{j}$, for all $j = 1, \ldots, m$, are jointly measurable in $\left(t, x\right) \in \left[0, T\right] \times \mathbb{R}^{d}$ (measurability);

\item[(A.2)] There exists a constant $K > 0$ such that
\begin{equation*}
\left\|a\left(t, x\right) - a\left(t, y\right)\right\|^{2} \vee \left\|b^{j}\left(t, x\right) - b^{j}\left(t, y\right)\right\|^{2} \leq K\left\|x - y\right\|^{2},
\end{equation*}
\noindent for all $j = 1, \ldots, m$, for all $t \in \left[0, T\right]$ and $x, y \in \mathbb{R}^{d}$ (Lipschitz condition);

\item[(A.3)] There exists a constant $K > 0$ such that
\begin{equation*}
\left\|a\left(t, x\right)\right\|^{2} \vee \left\|b^{j}\left(t, x\right)\right\|^{2} \leq K\left(1 + \left\|x\right\|^{2}\right),
\end{equation*}
\noindent for all $j = 1, \ldots, m$, for all $t \in \left[0, T\right]$ and $x \in \mathbb{R}^{d}$ (linear growth bound).
\end{itemize}

Then, the stochastic differential equation \eqref{est} has a path-wise unique strong solution $X_{t}$ on $\left[0, T\right]$ with $E\left[\sup_{0 \leq t \leq T}\left\|X_{t}\right\|^{2}\right] < \infty$.
\end{theorem}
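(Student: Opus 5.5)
The plan is the classical Picard iteration argument in the Banach space $\mathcal{M}^2$ of $\{\mathcal{F}_t\}$-adapted, measurable processes $Y$ with $E\bigl[\sup_{0\le t\le T}\|Y_t\|^2\bigr]<\infty$. Let $b^j$ denote the noise coefficients of \eqref{est}. The iterates are $X^{(0)}_t = X_0$ and
\begin{equation*}
X^{(n+1)}_t = X_0 + \int_0^t a\bigl(s, X^{(n)}_s\bigr)\,{\rm d}s + \sum_{j=1}^m \int_0^t b^j\bigl(s, X^{(n)}_s\bigr)\,{\rm d}W^j_s .
\end{equation*}
Assumption (A.1) and adaptedness of $X^{(n)}$ make the integrands progressively measurable. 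The linear growth bound (A.3) then shows that the integrands are square integrable, so each iterate is well defined, continuous and adapted.

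First I will show by induction that $X^{(n)}\in\mathcal{M}^2$ with a bound independent of $n$. For this I use $\|u+v+w\|^2\le 3(\|u\|^2+\|v\|^2+\|w\|^2)$, Cauchy--Schwarz on the ${\rm d}s$ integral, and Doob's $L^2$ maximal inequality together with the It\^o isometry on the stochastic integrals. Combined with (A.3), these give
\begin{equation*}
E\Bigl[\sup_{s\le t}\|X^{(n+1)}_s\|^2\Bigr] \le C_1 + C_2\int_0^t E\Bigl[\sup_{r\le s}\|X^{(n)}_r\|^2\Bigr]{\rm d}s .
\end{equation*}
This yields a uniform bound, by iterating or by passing to the limit and applying Gronwall's lemma afterwards.

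The key step is the contraction estimate. Set $\Delta_n(t)=E\bigl[\sup_{s\le t}\|X^{(n+1)}_s-X^{(n)}_s\|^2\bigr]$. The same tools, now with the Lipschitz condition (A.2) in place of (A.3), give $\Delta_n(t)\le L\int_0^t\Delta_{n-1}(s)\,{\rm d}s$ with $L=(m+1)K(T+4)$ up to constant factors. Hence $\Delta_n(t)\le C(Lt)^n/n!$. Since $\sum_n \sqrt{C(LT)^n/n!}<\infty$, the iterates are Cauchy in $\mathcal{M}^2$. Chebyshev's inequality and Borel--Cantelli then give almost-sure uniform convergence to a continuous adapted limit $X$. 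To pass to the limit in the integrals I use (A.2) again: the It\^o isometry shows the drift and diffusion terms converge in $L^2$, so $X$ solves \eqref{est} and inherits $E[\sup_t\|X_t\|^2]<\infty$.

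For pathwise uniqueness, take two solutions $X$ and $\tilde X$ with the same initial value. I introduce the stopping time $\tau_R$ at which either one first leaves the ball of radius $R$, so that all moments are finite without a priori assumptions. Applying the same estimate to the stopped difference gives $E[\sup_{s\le t\wedge\tau_R}\|X_s-\tilde X_s\|^2]\le L\int_0^t E[\sup_{r\le s\wedge\tau_R}\|X_r-\tilde X_r\|^2]\,{\rm d}s$. Gronwall's lemma makes this zero, and letting $R\to\infty$ yields indistinguishability.

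I expect the main obstacle to be the careful handling of the supremum inside the expectation for the stochastic integral term, which needs Doob's (or the Burkholder--Davis--Gundy) inequality rather than the plain It\^o isometry. The localization in the uniqueness step is the other delicate point. Otherwise the argument is the standard one in \cite{Kloeden1992}, to which the statement is attributed.
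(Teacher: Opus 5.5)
Your proposal is correct. It is the standard successive-approximation argument of \cite{Kloeden1992}: Picard iterates, Doob's inequality with the It\^o isometry, the $(Lt)^n/n!$ bound with Borel--Cantelli, and a localized Gronwall argument for uniqueness. The paper gives no proof of its own and simply defers to that reference. One point to make explicit: your constants $C_1$ and $\Delta_0$ rely on the standing assumptions of Section~\ref{sec1} that $X_0$ is $\mathcal{F}_0$-measurable with $E[\|X_0\|^2]<\infty$, and these are not among (A.1)--(A.3).
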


In the above theorem, the symbol $\vee$ represents the maximum between two quantities, which implies that the inequality is satisfied by both the drift and the diffusion terms. In the following, ${\rm C}^{p, q}$ denotes the class of functions that are $p$ times continuously differentiable with respect to the first variable and $q$ times continuously differentiable with respect to the second variable.

\begin{remark}\label{rm11}
The assumption of global Lipschitz continuity can be weakened to local Lipschitz continuity, and the linear growth condition can be replaced by the inequality
\begin{equation*}
\left\langle a\left(t, x\right), x\right\rangle \vee \left\|b^{j}\left(t, x\right)\right\|^{2} \leq K\left(1 + \left\|x\right\|^{2}\right),
\end{equation*}
\noindent for some constant $K > 0$, for all $j = 1, \ldots, m$, for all $x \in \mathbb{R}^{d}$ and $t \in \left[0, T\right]$ (see \cite{Mao1997}).

In particular, if the drift and each diffusion coefficient are continuously differentiable with bounded derivatives, i.e., they belong to the set that we denote by ${\rm C}^{1, 1}$, then the measurability condition and local Lipschitz continuity are satisfied, which guarantees existence and uniqueness of the solution. We shall work under this latter framework, as will be shown later.
\end{remark}

Let us now look at the concept of order of convergence in the stochastic case. We shall denote by $L^{2}\left(\Omega, \mathbb{R}^{d}\right)$ to the space whose processes satisfy that $E\left[\left\|\cdot\right\|^{2}\right] < \infty$.

\begin{definition}\label{def}
We consider a partition $\Delta_{N} = \left\{t_{0} = 0 < t_{1} < \ldots < t_{N} = T\right\}$ of $\left[0, T\right]$ and let $Y_{n}$ an approximation of the solution of problem \eqref{est} on the point $t_{n}$, for all $n = 0, \ldots, N$. Then, we say that $\left\{Y_{n}\right\}_{0 \leq n \leq N}$ converges (strongly) to $X_{t}$, the solution of that SDE, with order $p > 0$ if:
\begin{equation*}
\sup_{0 \leq n \leq N}\left\|X_{t_{n}} - Y_{n}\right\|_{L^{2}\left(\Omega, \mathbb{R}^{d}\right)} \coloneqq \sup_{0 \leq n \leq N}\left(E\left[\left\|X_{t_{n}} - Y_{n}\right\|^{2}\right]\right)^{\frac{1}{2}} \leq \mathcal{O}\left(h^{p}\right),
\end{equation*}
\noindent where $h$ is the diameter of the partition.
\end{definition}

\begin{remark}
The concept of strong convergence in this framework is used whenever we are trying to approximate the solution of the differential system. If we are interested in calculating the approximation of moments, then we would have to use the concept of weak convergence. This convergence is studied in depth in references \cite{Ninomiya2008, R2009}.
\end{remark}

\section{The It{\^o}--Taylor analysis}\label{s3}
In this section we introduce a generalization of the Taylor expansion in the stochastic context and some numerical schemes obtained from such expansion.

\subsection{The It{\^o}--Taylor expansion}
First, we shall consider an It{\^o} process $X_{t}$ that satisfies the $d$-dimensional SDE defined in \eqref{est}. Henceforth, we take the same notation that is used in \cite{Kloeden1992}: Let $\alpha = \left(j_{1}, \ldots, j_{l}\right)$ be a multi-index, we define $\mathcal{M}$ as the set of all multi-indices:
\begin{equation*}
\mathcal{M} = \left\{\alpha = \left(j_{1}, \ldots, j_{l}\right) : j_{i} \in \left\{0, 1, \ldots, m\right\}, i \in \left\{1, \ldots, l\right\}, \text{ for } l = 1, 2, \ldots\right\} \cup \left\{v\right\},
\end{equation*}
\noindent where $l \coloneqq l\left(\alpha\right)$ represent the length of a multi-index $\alpha$ and $v$ is the multi-index of length zero. Also, let $n\left(\alpha\right)$ be the number of components of a multi-index $\alpha$ which value is equal to 0, ${}^{-}\alpha$ the multi-index outcome of removing the initial element of $\alpha$ and $\alpha^{-}$ the multi-index outcome of removing the last element of $\alpha$.

Taking $a^{k}$ and $b^{k, j}$ as the component $k$ of $a$ and $b^{j}$ respectively with $k = 1, \ldots, d$, we define the following operators:
\begin{equation*}
L^{0} = \frac{\partial}{\partial t} + \sum_{k = 1}^{d}a^{k}\frac{\partial}{\partial x_{k}} + \frac{1}{2}\sum_{k, l = 1}^{d}\sum_{j = 1}^{m}b^{k, j}b^{l, j}\frac{\partial^{2}}{\partial x_{k}\partial x_{l}},
\end{equation*}
\begin{equation*}
L^{j} = \sum_{k = 1}^{d}b^{k, j}\frac{\partial}{\partial x_{k}}, j = 1, \ldots, m.
\end{equation*}

Then, we denote ${}_{\alpha}f$ as the coefficient functions of $f : \left[0, T\right] \times \mathbb{R}^{d} \to \mathbb{R}^{d}$, where ${}_{v}f \equiv f$ and ${}_{\alpha}f$ is the result of applying recursively the operators to $f$ as:
\begin{equation*}
{}_{\alpha}f = L^{j_{1}}\left({}_{{}^{-}\alpha}f\right).
\end{equation*}

On the other hand, we define the multiple It{\^o} integral recursively as
\begin{equation*}
\left\{\begin{array}{l}
\displaystyle I_{v}\left[f\left(\cdot, X_{\cdot}\right)\right]_{s, t} \equiv f\left(t, X_{t}\right), {\rm d}W_{t}^{0} = {\rm d}t,\\
\displaystyle I_{\alpha}\left[f\left(\cdot, X_{\cdot}\right)\right]_{s, t} = \int_{s}^{t}I_{\alpha^{-}}\left[f\left(\cdot, X_{\cdot}\right)\right]_{s, r}{\rm d}W_{r}^{j_{l}},
\end{array}\right.
\end{equation*}
\noindent and we denote by $I_{\alpha}\left[f\left(\cdot, X_{\cdot}\right)\right]_{s, t} = I_{\alpha, s, t}$ if $f \equiv 1$.

We say that the above integral exists if the adapted process $f = \left\{f\left(t\right), t \geq 0\right\}$ is right-continuous, possesses left hand limits (c{\`a}dl{\`a}g) and belongs to $\mathcal{H}_{\alpha}$, where these sets are defined below:

\begin{itemize}
\item $f \in \mathcal{H}_{v}$ if for each $t \geq 0$, $\left\|f\left(t\right)\right\| < \infty$ with probability 1 (w.p.1).

\item $f \in \mathcal{H}_{\left(0\right)}$ if for each $t \geq 0$, $\int_{0}^{t}\left\|f\left(s\right)\right\|{\rm d}s < \infty$ (w.p.1).

\item $f \in \mathcal{H}_{\left(1\right)}$ if for each $t \geq 0$, $\int_{0}^{t}\left\|f\left(s\right)\right\|^{2}{\rm d}s < \infty$ (w.p.1).

In addition, we write $\mathcal{H}_{\left(j\right)} = \mathcal{H}_{\left(1\right)}$ for each $j \in \left\{2, \ldots, m\right\}$ if $m \geq 2$.

\item $f \in \mathcal{H}_{\alpha}$, with $l\left(\alpha\right) \geq 2$, if for each $t \geq 0$, $I_{\alpha^{-}}\left[f\left(\cdot\right)\right]_{0, t} \in \mathcal{H}_{\left(j_{l}\right)}$ (w.p.1).
\end{itemize}

Now, we introduce the concept of hierarchical set and remainder set, which are the sets where the It{\^o}--Taylor expansion is well-defined.

\begin{definition}
$\mathcal{A} \subset \mathcal{M}$ is said to be a hierarchical set if:

\begin{itemize}
\item $\mathcal{A} \neq \emptyset$.

\item $\sup_{\alpha \in \mathcal{A}}l\left(\alpha\right) < \infty$.

\item ${}^{-}\alpha \in \mathcal{A}$ for each $\alpha \in \mathcal{A} \setminus \left\{v\right\}$.
\end{itemize}
\end{definition}

\begin{definition}
We define $\mathcal{B}\left(\mathcal{A}\right)$, the remainder set of $\mathcal{A}$, as the following set:
\begin{equation*}
\mathcal{B}\left(\mathcal{A}\right) = \left\{\alpha \in \mathcal{M} \setminus \mathcal{A} : {}^{-}\alpha \in \mathcal{A}\right\}.
\end{equation*}
\end{definition}

Once these definitions have been introduced, we can define the It{\^o}--Taylor expansion.

\begin{definition}
The It{\^o}--Taylor expansion of a function $f : \left[0, T\right] \times \mathbb{R}^{d} \to \mathbb{R}^{d}$ from $s$ to $t$, $0 \leq s \leq t \leq T$, in the hierarchical set $\mathcal{A}$, provided all derivatives of $f$, $a$ and $b^{j}$ and all of the multiple It{\^o} integrals exist, is defined as
\begin{equation*}
f\left(t, X_{t}\right) = \sum_{\alpha \in \mathcal{A}}{}_{\alpha}f\left(s, X_{s}\right)I_{\alpha, s, t} + \sum_{\alpha \in \mathcal{B}\left(\mathcal{A}\right)}I_{\alpha}\left[{}_{\alpha}f\left(\cdot, X_{\cdot}\right)\right]_{s, t}.
\end{equation*}
\end{definition}

In particular, if we take $p$ a number which can be written like $p = 0.5k$, with $k \in \mathbb{N} \cup \left\{0\right\}$, we define the set
\begin{equation*}
\mathcal{A}_{p} = \left\{\alpha \in \mathcal{M} : l\left(\alpha\right) + n\left(\alpha\right) \leq 2p \text{ or } l\left(\alpha\right) = n\left(\alpha\right) = p + \frac{1}{2}\right\}.
\end{equation*}

Then $\mathcal{A}_{p}$ is a hierarchical set, and the It{\^o}--Taylor expansion in this case is known as It{\^o}--Taylor expansion of order $p$ and its expression is
\begin{equation}
f\left(t, X_{t}\right) = \sum_{\alpha \in \mathcal{A}_{p}}{}_{\alpha}f\left(s, X_{s}\right)I_{\alpha, s, t} + \sum_{\alpha \in \mathcal{B}\left(\mathcal{A}_{p}\right)}I_{\alpha}\left[{}_{\alpha}f\left(\cdot, X_{\cdot}\right)\right]_{s, t}.
\label{Ito-Taylor}
\end{equation}

Consequently, when $s = t_{n}$ and $t = t_{n + 1}$, we have:
\begin{equation*}
X_{t_{n + 1}} = \sum_{\alpha \in \mathcal{A}_{p}}{}_{\alpha}F\left(t_{n}, X_{t_{n}}\right)I_{\alpha, n} + \sum_{\alpha \in \mathcal{B}\left(\mathcal{A}_{p}\right)}I_{\alpha}\left[{}_{\alpha}F\left(\cdot, X_{\cdot}\right)\right]_{t_{n}, t_{n + 1}},
\end{equation*}
\noindent where ${}_{\alpha}F\left(t, x\right)$ are the coefficient functions of $F\left(t, x\right) \equiv x$ and we have denoted $I_{\alpha, n} = I_{\alpha, t_{n}, t_{n + 1}}$. From now on, we denote the remainder of the It{\^o}--Taylor expansion $\sum_{\alpha \in \mathcal{B}\left(\mathcal{A}_{p}\right)}I_{\alpha}\left[{}_{\alpha}f\left(\cdot, X_{\cdot}\right)\right]_{s, t}$ by $\tilde{R}_{p}\left[f\right]_{s, t}$, and the stochastic terms of the It{\^o}--Taylor expansion of solution $X_{t}$ of order $p$ from $s$ to $t$, i.e., $\sum_{\alpha \in \mathcal{A}_{p} : l\left(\alpha\right) \neq n\left(\alpha\right)}{}_{\alpha}F\left(s, X_{s}\right)I_{\alpha, s, t}$, by $S_{p}\left[X_{s}\right]_{s, t}$.

Finally, in order to ensure the approximation to be of order $p$, we state the standard hypotheses required for the coefficient functions ${}_{\alpha}F$, which can be related with those of Theorem~\ref{eu} of existence and uniqueness.

\begin{hypothesis}
Assume that ${}_{\alpha}F$ satisfies that:

\begin{itemize}
\item[(H.1)] ${}_{{}^{-}\alpha}F \in {\rm C}^{1, 2}$ and ${}_{\alpha}F \in \mathcal{H}_{\alpha}$, for all $\alpha \in \mathcal{A}_{p} \cup \mathcal{B}\left(\mathcal{A}_{p}\right)$ (measurability condition).

\item[(H.2)] There exists a constant $C_{1} > 0$ such that $\left\|{}_{\alpha}F\left(t, x\right) - {}_{\alpha}F\left(t, y\right)\right\|^{2} \leq C_{1}\left\|x - y\right\|^{2}$, for all $\alpha \in \mathcal{A}_{p}$, $t \in \left[0, T\right]$ and $x, y \in \mathbb{R}^{d}$ (Lipschitz condition with respect to the second variable).

\item[(H.3)] There exists a constant $C_{2} > 0$ such that $\left\|{}_{\alpha}F\left(t, x\right)\right\|^{2} \leq C_{2}\left(1 + \left\|x\right\|^{2}\right)$, for all $\alpha \in \mathcal{A}_{p} \cup \mathcal{B}\left(\mathcal{A}_{p}\right)$, $t \in \left[0, T\right]$ and $x \in \mathbb{R}^{d}$ (lineal growth bound condition).
\end{itemize}
\end{hypothesis}

As discussed in Remark~\ref{rm11}, where ${\rm C}^{1, 1}$ regularity was shown to imply local Lipschitz continuity in the relaxation of the existence and uniqueness result of Theorem~\ref{eu}, Hypothesis~(H.2) can likewise be relaxed for the drift when the structure of the scheme allows it. The following condition will be used later to establish convergence results under weaker assumptions.

\begin{hypothesis}\label{H}
Suppose that

\item[(H.1$^{\ast}$) Linear growth and regularity.] The functions ${}_{{}^{-}\alpha}F \in {\rm C}^{1, 2}$, ${}_{\alpha}F \in \mathcal{H}_{\alpha}$ and they satisfy global linear growth bounds, for $\alpha \in \mathcal{A}_{p} \cup \mathcal{B}\left(\mathcal{A}_{p}\right)$. Also, the functions ${}_{\alpha}F$, for $\alpha \in \mathcal{A}_{p}$, with $\alpha \neq \left(0\right)$, are globally Lipschitz.

\item[(H.2$^{\ast}$) One-sided Lipschitz condition and one-sided linear growth.] There exists $C \geq 0$ such that for all $x, y \in \mathbb{R}^{d}$ and $t \in \left[0, T\right]$,
\begin{equation}\label{Lips}
\left\langle a\left(t, x\right) - a\left(t, y\right), x - y\right\rangle \leq C\left\|x - y\right\|^{2},
\end{equation}
\noindent and there exists $K > 0$ such that for all $x \in \mathbb{R}^{d}$ and $t \in \left[0, T\right]$,
\begin{equation}\label{glboun}
\left\langle a\left(t, x\right), x\right\rangle \leq K\left(1 + \left\|x\right\|^{2}\right),
\end{equation}
\end{hypothesis}

Similar to the construction of numerical methods for deterministic differential equations based on the Taylor development, we can construct methods in the stochastic case by taking the It{\^o}--Taylor expansion as we will see in the next subsection.

\subsection{Numerical method based on It{\^o}--Taylor expansion}
Taking the It{\^o}--Taylor expansion of order $p$ and disregarding the summation corresponding to the remainder set, we obtain a whole family of numerical methods that can be written as
\begin{equation*}
\left\{\begin{array}{l}
\displaystyle \text{Given } Y_{0}, \text{ compute } \forall n = 0, \ldots, N - 1,\\
\displaystyle Y_{n + 1} = \sum_{\alpha \in \mathcal{A}_{p}}{}_{\alpha}F\left(t_{n}, Y_{n}\right)I_{\alpha, n}.
\end{array}\right.
\end{equation*}

In the particular case $p = 0.5$, we have $\mathcal{A}_{0.5} = \left\{v, \left(0\right), \left(1\right), \ldots, \left(m\right)\right\}$ and the scheme is written as:
\begin{equation*}
\left\{\begin{array}{l}
\displaystyle \text{Given } Y_{0}, \text{ compute } \forall n = 0, \ldots, N - 1,\\
\displaystyle Y_{n + 1} = Y_{n} + a\left(t_{n}, Y_{n}\right)h_{n} + \sum_{j = 1}^{m}b^{j}\left(t_{n}, Y_{n}\right)I_{\left(j\right), n},
\end{array}\right.
\end{equation*}
\noindent where $h_{n} = t_{n + 1} - t_{n}$. This method is known as Euler-Maruyama scheme, which has been studied by different authors. In \cite{Cambanis1996}, we can also find the proof that this is a method of order $p = 0.5$.

For the case $p = 1$, the hierarchical set is $\mathcal{A}_{1} = \mathcal{A}_{0.5} \cup \left\{\left(1, 1\right), \left(1, 2\right), \ldots, \left(m, m\right)\right\}$ and the scheme is:
\begin{equation*}
\left\{\begin{array}{l}
\displaystyle \text{Given } Y_{0}, \text{ compute } \forall n = 0, \ldots, N - 1,\\
\displaystyle Y_{n + 1} = Y_{n} + a\left(t_{n}, Y_{n}\right)h_{n} + \sum_{j = 1}^{m}b^{j}\left(t_{n}, Y_{n}\right)I_{\left(j\right), n} + \sum_{j_{1}, j_{2} = 1}^{m}L^{j_{1}}b^{j_{2}}\left(t_{n}, Y_{n}\right)I_{\left(j_{1}, j_{2}\right), n}.
\end{array}\right.
\end{equation*}

This is the Milstein scheme and a proof of order of convergence $p = 1$ can be found in \cite{Kloeden1992}. We have presented here the two best known and most common methods in the numerical solution of ordinary differential equations. In the same way that we have deduced these methods, other methods can be obtained by considering other values of $p$.

\section{The stochastic TR-BDF2 method}\label{s4}
Before developing the stochastic TR-BDF2 method, let us recall the deterministic version of the method as well as its main numerical properties.

Let us consider a $d$-dimensional differential equation
\begin{equation*}
\left\{\begin{array}{ll}
\displaystyle x'\left(t\right) = a\left(t, x\left(t\right)\right), & \displaystyle t \in \left[0, T\right],\\
\displaystyle x\left(0\right) = x_{0}, & \displaystyle
\end{array}\right.
\end{equation*}
\noindent where $T$ is a positive constant, $a : \left[0, T\right] \times \mathbb{R}^{d} \to \mathbb{R}^{d}$ and $x_{0}$ is the initial data. Our objective is to solve this problem from a numerical point of view, i.e., we want to construct one sequence $\left\{y_{n}\right\} \simeq \left\{x\left(t_{n}\right)\right\}$, $n = 0, \ldots, N$, for some $N \in \mathbb{N}$.

For simplicity, we consider a uniform partition of the interval $\left[0, T\right]$ with step $h = \frac{T}{N}$:
\begin{equation}
\Delta_{N} = \left\{t_{0} = 0 < t_{1} < \cdots < t_{N} = T\right\}.
\label{Dn}
\end{equation}

The method TR-BDF2 for this problem is written as:
\begin{equation}
\left\{\begin{array}{l}
\displaystyle \text{Given } y_{0}, \text{ compute } \forall n = 0, \ldots, N - 1,\\
\displaystyle y_{n + \gamma} = y_{n} + \frac{1}{2}\left(a\left(t_{n}, y_{n}\right) + a\left(t_{n + \gamma}, y_{n + \gamma}\right)\right)h\gamma,\\
\displaystyle y_{n + 1} = \gamma_{3}y_{n + \gamma} + \left(1 - \gamma_{3}\right)y_{n} + a\left(t_{n + 1}, y_{n + 1}\right)h\gamma_{2},
\end{array}\right.
\label{trbdf2deter}
\end{equation}
\noindent where $t_{n + \gamma} = t_{n} + \gamma h$, $\gamma \in \left(0, 1\right)$ is a parameter, $\gamma_{2} = \frac{1 - \gamma}{2 - \gamma}$ and $\gamma_{3} = \frac{1}{\gamma\left(2 - \gamma\right)}$.

The TR-BDF2 procedure is well-known and used because of its several interesting accuracy and stability properties. Especially important for stiff problems are the stability properties: $A$-stability and $L$-stability. These concepts can be found in detail in \cite{Hairer2010}. It is $A$-stable for any value of $\gamma$ and $L$-stable for $\gamma = 2 - \sqrt{2}$, which is shown in \cite{Bank1985}. From now on we will use this parameter value.

A first idea in order to solve stochastic equations is to use this method in the deterministic part and the usual approximation for the stochastic terms. However, this combination provides a method at most of order 1, as we will show later in Remark~\ref{remark2}. Consequently, the order 2 of the original method would be lost.

Our aim is to develop a method for stochastic differential equation problems, which generalizes the deterministic TR-BDF2, i.e., if the noise is zero, the same method is recovered and also has order 2 in the sense described in Definition~\ref{def}.

In order to obtain the method, the development of It{\^o}--Taylor of different orders, introduced in the last section, applied to some functions will be fundamental.

First of all, we will fix some notations that will be used to obtain the method:

\begin{itemize}
\item We denote by $\hat{\Delta}_{2N}$ a partition of the interval $\left[0, T\right]$ with $2N + 1$ points,
\begin{equation*}
\hat{\Delta}_{2N} = \left\{\hat{t}_{0} = 0 < \hat{t}_{1} < \hat{t}_{2} < \cdots < \hat{t}_{2N - 1} < \hat{t}_{2N} = T\right\},
\end{equation*}
\noindent where $\hat{t}_{2j} = t_{0} + jh$, $\forall j = 0, \ldots, N$ and $\hat{t}_{2j + 1} = \hat{t}_{2j} + \gamma h$, $\forall j = 0, \ldots, N - 1$. Note that the even indices of the partition correspond to the $\Delta_{N}$ partition defined in \eqref{Dn}.

\item $\Delta W_{s, t}^{j} \coloneqq I_{\left(j\right), s, t}$, the stochastic integral of one index, which satisfies $\Delta W_{s, t}^{j} \sim \mathcal{N}\left(0, t - s\right)$. In particular, $\forall k = 0, \ldots, N - 1$ and $n = 2k$:

\begin{itemize}
\item[*] $\Delta W_{\hat{t}_{2k}, \hat{t}_{2k + 1}}^{j} = W_{t_{n + \gamma}}^{j} - W_{t_{n}}^{j}$, with $\Delta W_{t_{n}, t_{n + \gamma}}^{j} \sim \mathcal{N}\left(0, h\gamma\right)$.

\item[*] $\Delta W_{\hat{t}_{2k + 1}, \hat{t}_{2k + 2}}^{j} = W_{t_{n + 1}}^{j} - W_{t_{n + \gamma}}^{j}$, with $\Delta W_{t_{n + \gamma}, t_{n + 1}}^{j} \sim \mathcal{N}\left(0, h\left(1 - \gamma\right)\right)$.
\end{itemize}

\item $a_{t} \coloneqq a\left(t, X_{t}\right)$, $L^{0}a_{t} \coloneqq L^{0}a\left(t, X_{t}\right)$, $L^{j}a_{t} \coloneqq L^{j}a\left(t, X_{t}\right)$, $j = 1, \ldots, m$ and for $j_{1}, j_{2} = 1, \ldots, m$, $L^{j_{1}}L^{j_{2}}a_{t} \coloneqq L^{j_{1}}L^{j_{2}}a\left(t, X_{t}\right)$, the coefficient functions corresponding to the deterministic part of the SDE, evaluated at the solution $X_{t}$ at time $t$.

\item $\hat{a}_{k} \coloneqq a\left(\hat{t}_{k}, Y_{k}\right)$, $L^{0}\hat{a}_{k} \coloneqq L^{0}a\left(\hat{t}_{k}, Y_{k}\right)$, $L^{j}\hat{a}_{k} \coloneqq L^{j}a\left(\hat{t}_{k}, Y_{k}\right)$, $j = 1, \ldots, m$ and for $j_{1}, j_{2} = 1, \ldots, m$, $L^{j_{1}}L^{j_{2}}\hat{a}_{k} \coloneqq L^{j_{1}}L^{j_{2}}a\left(\hat{t}_{k}, Y_{k}\right)$, the coefficient functions corresponding to the deterministic part of the SDE, evaluated at the approximation $Y_{k}$ at time $\hat{t}_{k} \in \hat{\Delta}_{2N}$.

\item In some parts of this paper, we denote by $t_{n} = \hat{t}_{2j}$ and $t_{n + \gamma} = \hat{t}_{2j + 1}$.
\end{itemize}

\subsection{Construction of the method}\label{sec41}
We start with the It{\^o}--Taylor expansion defined in \eqref{Ito-Taylor} of solution $X_{t}$ of order 2 from $t_{n}$ to $t_{n + \gamma}$:
\begin{equation}
X_{t_{n + \gamma}} = X_{t_{n}} + a_{t_{n}}h\gamma + \frac{1}{2}L^{0}a_{t_{n}}h^{2}\gamma^{2} + S_{2}\left[X_{t_{n}}\right]_{t_{n}, t_{n + \gamma}} + \tilde{R}_{2}\left[F\right]_{t_{n}, t_{n + \gamma}},
\label{aa}
\end{equation}
\noindent where we recall that ${}_{\alpha}F\left(t, x\right)$ are the coefficient functions of $F\left(t, x\right) \equiv x$ and $S_{2}\left[X_{t_{n}}\right]_{t_{n}, t_{n + \gamma}}$ corresponds to the stochastic terms.

For function $a$, we take the It{\^o}--Taylor expansion of order 1 from $t_{n}$ to $t_{n + \gamma}$:
\begin{equation}
a_{t_{n + \gamma}} = a_{t_{n}} + L^{0}a_{t_{n}}h\gamma + \sum_{j = 1}^{m}L^{j}a_{t_{n}}\Delta W_{t_{n}, t_{n + \gamma}}^{j} + \sum_{j_{1}, j_{2} = 1}^{m}L^{j_{1}}L^{j_{2}}a_{t_{n}}I_{\left(j_{1}, j_{2}\right), t_{n}, t_{n + \gamma}} + \tilde{R}_{1}\left[a\right]_{t_{n}, t_{n + \gamma}},
\label{aaaaa}
\end{equation}

Writing $a_{t_{n}}h\gamma = \frac{1}{2}a_{t_{n}}h\gamma + \frac{1}{2}a_{t_{n}}h\gamma$ in equality \eqref{aa}, we obtain:
\begin{equation}
X_{t_{n + \gamma}} = X_{t_{n}} + \frac{1}{2}\left(a_{t_{n}} + a_{t_{n + \gamma}}\right)h\gamma + P_{t_{n}} + R_{n},
\label{Xng}
\end{equation}
\noindent with
\begin{equation*}
P_{t_{n}} = S_{2}\left[X_{t_{n}}\right]_{t_{n}, t_{n + \gamma}} - \frac{1}{2}\sum_{j = 1}^{m}L^{j}a_{t_{n}}\Delta W_{t_{n}, t_{n + \gamma}}^{j}h\gamma - \frac{1}{2}\sum_{j_{1}, j_{2} = 1}^{m}L^{j_{1}}L^{j_{2}}a_{t_{n}}I_{\left(j_{1}, j_{2}\right), t_{n}, t_{n + \gamma}}h\gamma,
\end{equation*}
\noindent and
\begin{equation*}
R_{n} = \tilde{R}_{2}\left[F\right]_{t_{n}, t_{n + \gamma}} - \frac{1}{2}\tilde{R}_{1}\left[a\right]_{t_{n}, t_{n + \gamma}}h\gamma.
\end{equation*}

Now, let us consider again the It{\^o}--Taylor expansion \eqref{Ito-Taylor} of the solution $X_{t}$ of order 2 from $t_{n + \gamma}$ to $t_{n + 1}$
\begin{equation}
X_{t_{n + 1}} = X_{t_{n + \gamma}} + a_{t_{n + \gamma}}h\left(1 - \gamma\right) + \frac{1}{2}L^{0}a_{t_{n + \gamma}}h^{2}\left(1 - \gamma\right)^{2} + S_{2}\left[X_{t_{n + \gamma}}\right]_{t_{n + \gamma}, t_{n + 1}} + \tilde{R}_{2}\left[F\right]_{t_{n + \gamma}, t_{n + 1}}.
\label{otro}
\end{equation}

Taking $X_{t_{n + \gamma}} = \gamma_{3}X_{t_{n + \gamma}} + \left(1 - \gamma_{3}\right)X_{t_{n + \gamma}}$ in equation \eqref{otro}, considering the values of method defined in \eqref{trbdf2deter}, and replacing this equality in \eqref{Xng}, we obtain:
\begin{equation}
\begin{array}{l}
\displaystyle X_{t_{n + 1}} = \gamma_{3}X_{t_{n + \gamma}} + \left(1 - \gamma_{3}\right)\left(X_{t_{n}} + \frac{1}{2}\left(a_{t_{n}} + a_{t_{n + \gamma}}\right)h\gamma + P_{t_{n}} + R_{n}\right) + a_{t_{n + \gamma}}h\left(1 - \gamma\right)\\
\displaystyle \mathrel{\phantom{X_{t_{n + 1}} = }}+ \frac{1}{2}L^{0}a_{t_{n + \gamma}}h^{2}\left(1 - \gamma\right)^{2} + S_{2}\left[X_{t_{n + \gamma}}\right]_{t_{n + \gamma}, t_{n + 1}} + \tilde{R}_{2}\left[F\right]_{t_{n + \gamma}, t_{n + 1}}.
\end{array}
\label{n1}
\end{equation}

Using the same procedure as in the derivation of equality \eqref{aaaaa} between $t_{n + \gamma}$ and $t_{n + 1}$ allows us
\begin{equation*}
\begin{array}{l}
\displaystyle a_{t_{n + \gamma}} = a_{t_{n + 1}} - L^{0}a_{t_{n + \gamma}}h\left(1 - \gamma\right) - \sum_{j = 1}^{m}L^{j}a_{t_{n + \gamma}}\Delta W_{t_{n + \gamma}, t_{n + 1}}^{j} - \sum_{j_{1}, j_{2} = 1}^{m}L^{j_{1}}L^{j_{2}}a_{t_{n + \gamma}}I_{\left(j_{1}, j_{2}\right), t_{n + \gamma}, t_{n + 1}}\\
\displaystyle \mathrel{\phantom{a_{t_{n + \gamma}} = }}- \tilde{R}_{1}\left[a\right]_{t_{n + \gamma}, t_{n + 1}}.
\end{array}
\end{equation*}

On the other hand, considering now the It{\^o}--Taylor expansion \eqref{Ito-Taylor} of order 0 of $L^{0}a$ from $t_{n}$ to $t_{n + \gamma}$:
\begin{equation*}
L^{0}a_{t_{n}} = L^{0}a_{t_{n + \gamma}} - \tilde{R}_{0}\left[L^{0}a\right]_{t_{n}, t_{n + \gamma}},
\end{equation*}
\noindent from where we can write equation \eqref{n1} as follows:
\begin{equation*}
X_{t_{n + 1}} = \gamma_{3}X_{t_{n + \gamma}} + \left(1 - \gamma_{3}\right)X_{t_{n}} + a_{t_{n + 1}}h\gamma_{2} + Q_{t_{n}} + Q_{t_{n + \gamma}} + R_{n + \gamma}.
\end{equation*}

The stochastic terms are
\begin{equation*}
\begin{array}{l}
\displaystyle Q_{t_{n}} = S_{2}\left[X_{t_{n}}\right]_{t_{n}, t_{n + \gamma}}\left(1 - \gamma_{3}\right) - \sum_{j = 1}^{m}L^{j}a_{t_{n}}\Delta W_{t_{n}, t_{n + \gamma}}^{j}h\left(1 - \gamma_{3}\right)\gamma\\
\displaystyle \mathrel{\phantom{Q_{t_{n}} = }}- \sum_{j_{1}, j_{2} = 1}^{m}L^{j_{1}}L^{j_{2}}a_{t_{n}}I_{\left(j_{1}, j_{2}\right), t_{n}, t_{n + \gamma}}h\left(1 - \gamma_{3}\right)\gamma,
\end{array}
\end{equation*}
\begin{equation*}
Q_{t_{n + \gamma}} = S_{2}\left[X_{t_{n + \gamma}}\right]_{t_{n + \gamma}, t_{n + 1}} - \sum_{j = 1}^{m}L^{j}a_{t_{n + \gamma}}\Delta W_{t_{n + \gamma}, t_{n + 1}}^{j}h\gamma_{2} - \sum_{j_{1}, j_{2} = 1}^{m}L^{j_{1}}L^{j_{2}}a_{t_{n + \gamma}}I_{\left(j_{1}, j_{2}\right), t_{n + \gamma}, t_{n + 1}}h\gamma_{2},
\end{equation*}
\noindent and the remainder is
\begin{equation*}
\begin{array}{l}
\displaystyle R_{n + \gamma} = \tilde{R}_{2}\left[F\right]_{t_{n}, t_{n + \gamma}}\left(1 - \gamma_{3}\right) + \tilde{R}_{2}\left[F\right]_{t_{n + \gamma}, t_{n + 1}} - \tilde{R}_{1}\left[a\right]_{t_{n}, t_{n + \gamma}}h\left(1 - \gamma_{3}\right)\gamma - \tilde{R}_{1}\left[a\right]_{t_{n + \gamma}, t_{n + 1}}h\gamma_{2}\\
\displaystyle \mathrel{\phantom{R_{n + \gamma} = }}+ \frac{1}{2}\tilde{R}_{0}\left[L^{0}a\right]_{t_{n}, t_{n + \gamma}}h^{2}\left(1 - \gamma_{3}\right)\gamma^{2}.
\end{array}
\end{equation*}

Finally, the expansion of solution $X_{t}$ that we have obtained is:
\begin{equation}
\left\{\begin{array}{l}
\displaystyle X_{t_{n + \gamma}} = X_{t_{n}} + \frac{1}{2}\left(a_{t_{n}} + a_{t_{n + \gamma}}\right)h\gamma + P_{t_{n}} + R_{n},\\
\displaystyle X_{t_{n + 1}} = \gamma_{3}X_{t_{n + \gamma}} + \left(1 - \gamma_{3}\right)X_{t_{n}} + a_{t_{n + 1}}h\gamma_{2} + Q_{t_{n}} + Q_{t_{n + \gamma}} + R_{n + \gamma}.
\end{array}\right.
\label{eqex}
\end{equation}

We can conclude that the method we have constructed, which we will call stochastic TR-BDF2, is as follows
\begin{equation}
\left\{\begin{array}{l}
\displaystyle \text{Given } Y_{0}, \text{ compute } \forall n = 0, \ldots, N - 1,\\
\displaystyle Y_{n + \gamma} = Y_{n} + \frac{1}{2}\left(\hat{a}_{n} + \hat{a}_{n + \gamma}\right)h\gamma + \hat{P}_{n},\\
\displaystyle Y_{n + 1} = \gamma_{3}Y_{n + \gamma} + \left(1 - \gamma_{3}\right)Y_{n} + \hat{a}_{n + 1}h\gamma_{2} + \hat{Q}_{n} + \hat{Q}_{n + \gamma},
\end{array}\right.
\label{trbdf2}
\end{equation}
\noindent where, if we denote $\sum_{\alpha \in \mathcal{A}_{2} : l\left(\alpha\right) \neq n\left(\alpha\right)}{}_{\alpha}F\left(t_{n}, Y_{n}\right)I_{\alpha, t_{n}, t_{n + \gamma}}$ by $S_{2}\left[Y_{n}\right]$ and similarly, in the auxiliary step, we denote $\sum_{\alpha \in \mathcal{A}_{2} : l\left(\alpha\right) \neq n\left(\alpha\right)}{}_{\alpha}F\left(t_{n + \gamma}, Y_{n + \gamma}\right)I_{\alpha, t_{n + \gamma}, t_{n + 1}}$ by $S_{2}\left[Y_{n + \gamma}\right]$:
\begin{equation*}
\hat{P}_{n} = S_{2}\left[Y_{n}\right] - \frac{1}{2}\sum_{j = 1}^{m}L^{j}\hat{a}_{n}\Delta W_{t_{n}, t_{n + \gamma}}^{j}h\gamma - \frac{1}{2}\sum_{j_{1}, j_{2} = 1}^{m}L^{j_{1}}L^{j_{2}}\hat{a}_{n}I_{\left(j_{1}, j_{2}\right), t_{n}, t_{n + \gamma}}h\gamma,
\end{equation*}
\begin{equation*}
\displaystyle \hat{Q}_{n} = \left(S_{2}\left[Y_{n}\right] - \sum_{j = 1}^{m}\left(L^{j}\hat{a}_{n}\Delta W_{t_{n}, t_{n + \gamma}}^{j} + \sum_{j_{2} = 1}^{m}L^{j}L^{j_{2}}\hat{a}_{n}I_{\left(j, j_{2}\right), t_{n}, t_{n + \gamma}}\right)h\gamma\right)\left(1 - \gamma_{3}\right),
\end{equation*}
\begin{equation*}
\displaystyle \hat{Q}_{n + \gamma} = S_{2}\left[Y_{n + \gamma}\right] - \sum_{j = 1}^{m}\left(L^{j}\hat{a}_{n + \gamma}\Delta W_{t_{n + \gamma}, t_{n + 1}}^{j} + \sum_{j_{2} = 1}^{m}L^{j}L^{j_{2}}\hat{a}_{n + \gamma}I_{\left(j, j_{2}\right), t_{n + \gamma}, t_{n + 1}}\right)h\gamma_{2}.
\end{equation*}

\begin{remark}
Note that when the stochastic part is zero, i.e., if $b^{j} \equiv 0$ for all $j = 1, \ldots, m$, we recover the TR-BDF2 method as proposed at the beginning of this section.
\end{remark}

\begin{remark}\label{remark2}
We could think of simplifying the expressions of the stochastic part of the method by suppressing some of the summations that appear, since they are still remnants of the different It{\^o}--Taylor expansions considered, but this leads to the loss of order 2 of the method, as we will see in the section on numerical validation. We will now define different methods by suppressing some of these terms, which we will denote by TR-BDF2$^{r}$, with $r = 1, \ldots, 6$ and we will study their approximate order of convergence.

All these methods are obtained from scheme \eqref{trbdf2} with different definitions of $\hat{P}_{n}$, $\hat{Q}_{n}$ and $\hat{Q}_{n + \gamma}$:

\begin{itemize}
\item TR-BDF2$^{1}$
\begin{equation*}
\hat{P}_{n} = S_{2}\left[Y_{n}\right] - \frac{1}{2}\sum_{j, j_{2} = 1}^{m}L^{j}L^{j_{2}}\hat{a}_{n}I_{\left(j, j_{2}\right), t_{n}, t_{n + \gamma}}h\gamma.
\end{equation*}

\item TR-BDF2$^{2}$
\begin{equation*}
\hat{P}_{n} = S_{2}\left[Y_{n}\right] - \frac{1}{2}\sum_{j = 1}^{m}L^{j}\hat{a}_{n}\Delta W_{t_{n}, t_{n + \gamma}}^{j}h\gamma.
\end{equation*}

\item TR-BDF2$^{3}$
\begin{equation*}
\hat{Q}_{n} = S_{2}\left[Y_{n}\right]\left(1 - \gamma_{3}\right) - \sum_{j, j_{2} = 1}^{m}L^{j}L^{j_{2}}\hat{a}_{n}I_{\left(j, j_{2}\right), t_{n}, t_{n + \gamma}}h\left(1 - \gamma_{3}\right)\gamma.
\end{equation*}

\item TR-BDF2$^{4}$
\begin{equation*}
\hat{Q}_{n} = S_{2}\left[Y_{n}\right]\left(1 - \gamma_{3}\right) - \sum_{j = 1}^{m}L^{j}\hat{a}_{n}\Delta W_{t_{n}, t_{n + \gamma}}^{j}h\left(1 - \gamma_{3}\right)\gamma.
\end{equation*}

\item TR-BDF2$^{5}$
\begin{equation*}
\hat{Q}_{n + \gamma} = S_{2}\left[Y_{n + \gamma}\right] - \sum_{j, j_{2} = 1}^{m}L^{j}L^{j_{2}}\hat{a}_{n + \gamma}I_{\left(j, j_{2}\right), t_{n + \gamma}, t_{n + 1}}h\gamma_{2}.
\end{equation*}

\item TR-BDF2$^{6}$
\begin{equation*}
\hat{Q}_{n + \gamma} = S_{2}\left[Y_{n + \gamma}\right] - \sum_{j = 1}^{m}L^{j}\hat{a}_{n + \gamma}\Delta W_{t_{n + \gamma}, t_{n + 1}}^{j}h\gamma_{2}.
\end{equation*}
\end{itemize}
\end{remark}

\subsection{Convergence order}
To prove that the developed scheme \eqref{trbdf2} is indeed of order 2 we need some preliminary results, following an argument similar to that of Ewald et al. in \cite{Ewald2005}, where the theorems developed cannot be applied directly due to the structure of the method, although the techniques employed are analogous. These results are proved for uniform partitions but are easily extensible to non-uniform partitions which will be the ones we will use, since our $\hat{\Delta}_{2N}$ partition is not uniform. Throughout of this paper, $C$ is a positive constant whose value may change from line to line.

\begin{proposition}\label{bueno1}
Suppose $Y_{n}$ is a stochastic process adapted to the filtration $\mathcal{F}_{t}$ at the equipartition (i.e., $Y_{n}$ is $\mathcal{F}_{t_{n}}$-measurable) and it is also well-defined in the auxiliary steps $t_{n + \gamma}$, with $E\left[\sup_{0 \leq n \leq N}\left\|Y_{n}\right\|^{2}\right] < \infty$. Also, function $f$ belongs to ${\rm C}^{1, 2}$, and $\alpha$ is a multi-index with $l\left(\alpha\right) \geq 1$. Then, there exists a constant $C > 0$ such that
\begin{equation*}
E\left[\sup_{0 \leq l \leq n}\left\|\sum_{k = 0}^{l - 1}I_{\alpha}\left[f\left(t_{k}, X_{t_{k}}\right) - f\left(t_{k}, Y_{k}\right)\right]_{t_{k}, t_{k + \gamma}}\right\|^{2}\right] \leq Ch\sum_{k = 0}^{n - 1}E\left[\sup_{0 \leq l \leq k}\left\|X_{t_{l}} - Y_{l}\right\|^{2}\right].
\end{equation*}
\begin{equation*}
E\left[\sup_{0 \leq l \leq n}\left\|\sum_{k = 0}^{l - 1}I_{\alpha}\left[f\left(t_{k + \gamma}, X_{t_{k + \gamma}}\right) - f\left(t_{k + \gamma}, Y_{k + \gamma}\right)\right]_{t_{k + \gamma}, t_{k + 1}}\right\|^{2}\right] \leq Ch\sum_{k = 0}^{n - 1}E\left[\sup_{0 \leq l \leq k}\left\|X_{t_{l + \gamma}} - Y_{l + \gamma}\right\|^{2}\right].
\end{equation*}
\end{proposition}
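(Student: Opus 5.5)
The plan is to reduce both estimates to a statement about a single discrete-time sum. Write $g_k := f(t_k, X_{t_k}) - f(t_k, Y_k)$. Since the integrand of $I_\alpha[g_k]_{t_k,t_{k+\gamma}}$ is frozen at the left endpoint, we have $I_\alpha[g_k]_{t_k,t_{k+\gamma}} = g_k\, I_{\alpha,t_k,t_{k+\gamma}}$. Here $g_k$ is $\mathcal{F}_{t_k}$-measurable and $I_{\alpha,t_k,t_{k+\gamma}}$ is independent of $\mathcal{F}_{t_k}$. Its moments are $E[|I_{\alpha,t_k,t_{k+\gamma}}|^2] = C(\gamma h)^{l(\alpha)+n(\alpha)}$. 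Its mean is $0$ unless $\alpha$ consists only of zeros, in which case it equals $(\gamma h)^{l(\alpha)}/l(\alpha)!$. The second inequality is handled identically on the subintervals $[t_{k+\gamma}, t_{k+1}]$, using $\mathcal{F}_{t_{k+\gamma}}$-measurability of $Y_{k+\gamma}$ and $(1-\gamma)h$ in place of $\gamma h$. I will therefore only describe the first one.

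First we control $g_k$. Since $f\in {\rm C}^{1,2}$, and in the setting of the paper's hypotheses, I take $f$ (for the relevant coefficient functions) to be Lipschitz in $x$, which is what (H.2) provides. This gives $\|g_k\|^2\le C\|X_{t_k}-Y_k\|^2$. Note that $C^{1,2}$ alone only gives local Lipschitz bounds. I would state explicitly that the global Lipschitz constant from (H.2) / (H.1$^*$) is what is used, and that the square-integrability of $\sup_n\|Y_n\|$ ensures all quantities are finite.

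\textbf{Case 1:} $\alpha$ contains at least one nonzero index. Then $M_l := \sum_{k<l} g_k I_{\alpha,t_k,t_{k+\gamma}}$ is a discrete martingale with respect to $\mathcal{G}_l := \mathcal{F}_{t_l}$. Indeed, $t_{k+\gamma}\le t_{k+1}$ makes the $k$-th term $\mathcal{F}_{t_{k+1}}$-measurable, with zero conditional mean given $\mathcal{F}_{t_k}$. Doob's $L^2$ maximal inequality gives
\[
E\Big[\sup_{l\le n}\|M_l\|^2\Big]\le 4E\|M_n\|^2 .
\]
By orthogonality of the increments,
\[
4E\|M_n\|^2 = 4\sum_{k<n} E\|g_k\|^2\,E|I_{\alpha,t_k,t_{k+\gamma}}|^2 \le C\sum_{k<n} h^{l(\alpha)+n(\alpha)}E\|X_{t_k}-Y_k\|^2 .
\]
Since $l(\alpha)\ge1$ and $h\le T$, the factor is at most $Ch$. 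Finally we bound $E\|X_{t_k}-Y_k\|^2\le E[\sup_{l\le k}\|X_{t_l}-Y_l\|^2]$.

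\textbf{Case 2:} $\alpha=(0,\dots,0)$. Here the sum is deterministic-weighted, with $I_{\alpha,t_k,t_{k+\gamma}}=c\,h^{l}$, and Cauchy–Schwarz gives
\[
\sup_{l\le n}\Big\|\sum_{k<l} g_k c h^{l(\alpha)}\Big\|^2\le \Big(\sum_{k<n} c h^{l(\alpha)}\|g_k\|\Big)^2\le n\,c^2h^{2l(\alpha)}\sum_{k<n}\|g_k\|^2 .
\]
Since $nh\le T$ and $l(\alpha)\ge 1$, this is $\le C h\sum_{k<n}\|g_k\|^2$, and taking expectations concludes.

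\textbf{Main obstacle: mixed multi-indices.} A multi-index such as $(1,0)$ has nonzero mean conditional on nothing, but it is still a martingale increment because its outermost integral is a ${\rm d}t$ whose integrand has mean zero. More delicately, $(0,1)$ has mean zero. In general $E[I_\alpha]=0$ whenever some index is nonzero, since iterated Itô integrals with a ${\rm d}W$ anywhere have zero expectation. So the dichotomy above is exactly right. The point to check carefully is this zero-mean claim for the iterated integral, together with the conditional independence from $\mathcal{F}_{t_k}$ on the shifted subinterval. For the second estimate, I must also verify that $Y_{k+\gamma}$ is $\mathcal{F}_{t_{k+\gamma}}$-measurable, which follows from the implicit first stage being solved pathwise from $Y_k$ and increments up to $t_{k+\gamma}$. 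The rest is routine.
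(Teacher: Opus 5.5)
Your skeleton is the Ewald et al.\ argument that the paper points to; the paper gives no proof beyond that reference. You freeze the integrand so that $I_\alpha[g_k]_{t_k,t_{k+\gamma}}=g_k\,I_{\alpha,t_k,t_{k+\gamma}}$. You use Doob's inequality and orthogonality of increments when $\alpha$ has a nonzero entry, and Cauchy--Schwarz with $nh\le T$ when $\alpha=(0,\dots,0)$. The subintervals $[t_{k+\gamma},t_{k+1}]$ are treated the same way.

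The one place you part ways with the paper is the bound $\|g_k\|^2\le C\|X_{t_k}-Y_k\|^2$. The paper says it replaces Ewald's global Lipschitz hypothesis by the mean value theorem, using only continuity of $f$ and bounded second moments of $X$ and $Y$. You instead import a global Lipschitz constant and note that ${\rm C}^{1,2}$ alone gives only local bounds. You are right to do so. The mean value theorem only gives $\|g_k\|\le\sup\|D_xf(t_k,\xi)\|\,\|X_{t_k}-Y_k\|$, with the supremum over the random segment joining $Y_k$ and $X_{t_k}$. Finite second moments do not make that factor a deterministic constant. Already for $f(t,x)=x^3$, $\alpha=(0)$, $n=1$ and $Y_0=X_0+\varepsilon$, the left-hand side is $\gamma^2h^2E|X_0^3-Y_0^3|^2$. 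This is infinite when $X_0$ has two but not four moments, while the right-hand side is $Ch\varepsilon^2$. So the paper's route works only when $D_xf$ is bounded (the convention of Remark~\ref{rm11}), and then it is exactly yours. With that hypothesis stated explicitly, your proof is complete.

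Being explicit has a cost in the convergence theorem, which applies the proposition to $\sum_k(a_{t_k}-\hat{a}_k)h$. Under (H.1$^{\ast}$) the drift $a={}_{(0)}F$ is exempt from global Lipschitz continuity. Citing (H.1$^{\ast}$) therefore does not cover that term, and you need (H.2) there. The paper's mean-value remark does not cover it either.

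Finally, fix a slip in your last paragraph. $I_{(1,0)}=\int_{t_k}^{t_{k+\gamma}}(W_r-W_{t_k})\,{\rm d}r$ has mean zero, not nonzero mean. The general fact you state next is the correct one and is all Case 1 needs: $E[I_\alpha]=0$ whenever $\alpha$ has a nonzero entry, together with independence of $I_{\alpha,t_k,t_{k+\gamma}}$ from $\mathcal{F}_{t_k}$.
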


\begin{remark}
The proof of the Proposition above is analogous to that of the corresponding result in \cite{Ewald2005}, with the difference being the replacement of the global Lipschitz condition by the mean value theorem, which is applicable in this case due to the continuity assumption and the boundedness of the second moment of the solution and the approximation.
\end{remark}

\begin{proposition}\label{bueno2}
Suppose the function $f$ satisfies the linear growth bound condition, and $\alpha$ is a multi-index with $l\left(\alpha\right) \geq 1$. Then,
\begin{equation*}
E\left[\sup_{0 \leq l \leq n}\left\|\sum_{k = 0}^{l - 1}I_{\alpha}\left[f\left(\cdot, X_{\cdot}\right)\right]_{t_{k}, t_{k + \gamma}}\right\|^{2}\right] \leq Ch^{\Phi\left(\alpha\right)}\left(1 + E\left[\left\|X_{0}\right\|^{2}\right]\right),
\end{equation*}
\begin{equation*}
E\left[\sup_{0 \leq l \leq n}\left\|\sum_{k = 0}^{l - 1}I_{\alpha}\left[f\left(\cdot, X_{\cdot}\right)\right]_{t_{k + \gamma}, t_{k + 1}}\right\|^{2}\right] \leq Ch^{\Phi\left(\alpha\right)}\left(1 + E\left[\left\|X_{0}\right\|^{2}\right]\right),
\end{equation*}
\noindent where
\begin{equation*}
\Phi\left(\alpha\right) = \left\{\begin{array}{ll}
\displaystyle 2l\left(\alpha\right) - 2, & \displaystyle l\left(\alpha\right) = n\left(\alpha\right),\\
\displaystyle l\left(\alpha\right) + n\left(\alpha\right) - 1, & \displaystyle l\left(\alpha\right) \neq n\left(\alpha\right).
\end{array}\right.
\end{equation*}
\end{proposition}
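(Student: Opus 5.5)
We proceed as in the corresponding lemma of \cite{Ewald2005}, by induction on the length $l\left(\alpha\right)$ and using only standard properties of multiple It{\^o} integrals on the subintervals $\left[t_{k}, t_{k + \gamma}\right]$ and $\left[t_{k + \gamma}, t_{k + 1}\right]$. Both of these have length at most $h$ and are disjoint for different $k$, so the two estimates in the statement are proved in exactly the same way. We therefore describe only the first one.

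\textbf{A priori bound.} First we note that, by Theorem~\ref{eu} and the linear growth of $f$,
\begin{equation*}
\sup_{0 \leq t \leq T}E\left[\left\|f\left(t, X_{t}\right)\right\|^{2}\right] \leq C\left(1 + E\left[\left\|X_{0}\right\|^{2}\right]\right).
\end{equation*}
The stronger bound with $E\left[\sup_{t}\left\|X_{t}\right\|^{2}\right]$ is also available and is controlled by $C\left(1 + E\left[\left\|X_{0}\right\|^{2}\right]\right)$ via the usual Gronwall argument. This supplies the factor $\left(1 + E\left[\left\|X_{0}\right\|^{2}\right]\right)$.

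\textbf{Reduction to a single integral.} We split according to the last index $j_{l}$ of $\alpha$.

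If $j_{l} \neq 0$, the partial sums $M_{l} = \sum_{k < l}I_{\alpha}\left[f\right]_{t_{k}, t_{k + \gamma}}$ form a discrete martingale, since each term has zero conditional expectation given $\mathcal{F}_{t_{k}}$. Doob's maximal inequality and the orthogonality of increments then give
\begin{equation*}
E\left[\sup_{l \leq n}\left\|M_{l}\right\|^{2}\right] \leq 4\sum_{k = 0}^{n - 1}E\left[\left\|I_{\alpha}\left[f\right]_{t_{k}, t_{k + \gamma}}\right\|^{2}\right].
\end{equation*}

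If $j_{l} = 0$, we use the Cauchy--Schwarz inequality on the sum, which costs a factor $n \leq T/h$:
\begin{equation*}
E\left[\sup_{l}\left\|M_{l}\right\|^{2}\right] \leq n\sum_{k}E\left[\left\|I_{\alpha}\left[f\right]_{t_{k}, t_{k + \gamma}}\right\|^{2}\right].
\end{equation*}

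\textbf{Single-integral estimate.} We then apply the standard bound on a single multiple integral over an interval of length $\delta \leq h$ (Lemma 5.7.5 in \cite{Kloeden1992}):
\begin{equation*}
E\left[\left\|I_{\alpha}\left[f\right]_{s, s + \delta}\right\|^{2}\right] \leq C\delta^{l\left(\alpha\right) + n\left(\alpha\right)}\sup_{r}E\left[\left\|f\left(r, X_{r}\right)\right\|^{2}\right].
\end{equation*}
This is proved by iterating the It{\^o} isometry for the stochastic indices and Cauchy--Schwarz for the $\mathrm{d}t$ indices, by induction on $l\left(\alpha\right)$.

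\textbf{Counting powers of $h$.} Summing over $k$ and using $n \leq T/h$ gives the exponents of $\Phi$:
\begin{itemize}
\item[(a)] For $l\left(\alpha\right) = n\left(\alpha\right)$ (all indices are $0$), the single term is $\mathcal{O}\left(h^{2l}\right)$. The sum contributes $h^{-1}$ and the Cauchy--Schwarz step another $h^{-1}$, giving $h^{2l - 2}$.
\item[(b)] For $l\left(\alpha\right) \neq n\left(\alpha\right)$ with $j_{l} \neq 0$, the martingale step loses only the single $h^{-1}$ from the sum, giving $h^{l + n - 1}$.
\end{itemize}

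\textbf{Main obstacle.} The delicate case is $l\left(\alpha\right) \neq n\left(\alpha\right)$ with $j_{l} = 0$, where the naive bound loses one power too many. Here we first try the decomposition used in \cite{Ewald2005}. We write
\begin{equation*}
I_{\alpha}\left[f\right]_{t_{k}, t_{k + \gamma}} = \int_{t_{k}}^{t_{k + \gamma}}I_{\alpha^{-}}\left[f\right]_{t_{k}, r}\,\mathrm{d}r.
\end{equation*}
The inner integral $I_{\alpha^{-}}$ still contains a stochastic index, so it has zero conditional mean. The idea is to exchange the sum over $k$ with a stochastic-integral representation over $\left[0, t_{l}\right]$. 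Concretely, by the stochastic Fubini theorem we rewrite the outer $\mathrm{d}r$-integral so that the last nonzero index becomes the outermost one. The sum $\sum_{k}$ then becomes a single stochastic integral over $\left[0, t_{l}\right]$ whose integrand is of size $h^{\left(l + n - 1\right)/2}$ in $L^{2}$, for which Doob's inequality applies. This recovers the exponent $l\left(\alpha\right) + n\left(\alpha\right) - 1$.

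This index bookkeeping is the main technical point. The only change with respect to \cite{Ewald2005} is that the subintervals have lengths $\gamma h$ and $\left(1 - \gamma\right)h$, which only alters the constants.
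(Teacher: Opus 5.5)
Your proposal is correct and is essentially the argument the paper relies on without writing it out, namely the lemma of \cite{Ewald2005} (modelled on Kloeden--Platen \cite{Kloeden1992}). That argument uses Doob's inequality plus orthogonality when the summands are martingale increments, Cauchy--Schwarz when $\alpha$ consists only of zeros, and the single-integral bound $E[\|I_{\alpha}[f]_{s,s+\delta}\|^{2}] \leq C\delta^{l(\alpha)+n(\alpha)}$; the subinterval lengths $\gamma h$ and $(1-\gamma)h$ only affect constants.

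The one thing to simplify is your ``main obstacle''. As you note yourself, $I_{\alpha^{-}}[f]_{t_k,r}$ has zero conditional mean given $\mathcal{F}_{t_k}$, so conditional Fubini gives $E[I_{\alpha}[f]_{t_k,t_{k+\gamma}} \mid \mathcal{F}_{t_k}] = 0$ whenever $l(\alpha)\neq n(\alpha)$, whatever the last index is. The partial sums are therefore already a discrete martingale, and the Doob argument of your case (b) applies verbatim. This makes the stochastic Fubini rewriting unnecessary, although it does work, producing deterministic weights $(t_{k+\gamma}-u)^{m}/m!$.
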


\begin{proposition}\label{bueno3}
Suppose that a sequence of positive numbers $Z_{n}$, for $n = 0, 1, \ldots, N$, and $Z_{n + \gamma}$ for $n = 0, 1, \ldots, N - 1$ with $\gamma \in \left(0, 1\right)$, satisfies the inequality
\begin{equation*}
Z_{n} \leq C\left(h\sum_{k = 0}^{n - 1}Z_{k} + h\sum_{k = 0}^{n - 1}Z_{k + \gamma} + h^{p}\right),
\end{equation*}
\noindent for some positive constant $C$ and some $p > 0$. Then, $Z_{N} = \mathcal{O}\left(h^{p}\right)$ as $h \to 0$.
\end{proposition}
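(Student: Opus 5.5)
The plan is a discrete Gronwall argument on the refined partition $\hat{\Delta}_{2N}$. As literally written, the hypothesis constrains only the values $Z_n$ at the nodes of $\Delta_N$. For the conclusion to be meaningful, the intermediate values $Z_{n+\gamma}$ must satisfy an analogous bound. This is also what happens when the proposition is applied, with $Z_{n+\gamma} = E[\sup_{l\le n}\|X_{t_{l+\gamma}}-Y_{l+\gamma}\|^2]$ and Propositions~\ref{bueno1} and \ref{bueno2} used on $[t_k,t_{k+\gamma}]$. So I will make the standing assumption explicit:
\begin{equation*}
Z_{n+\gamma} \leq C\left(h\sum_{k=0}^{n}Z_{k} + h\sum_{k=0}^{n-1}Z_{k+\gamma} + h^{p}\right),
\end{equation*}
which may include the current node $Z_n$, since $t_n < t_{n+\gamma}$. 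Making this hypothesis precise is, I expect, the main conceptual obstacle; the rest is routine.

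\textbf{Step 1: merge the two inequalities.} Define $W_n := Z_n + Z_{n+\gamma}$ for $0\le n\le N-1$. In the bound for $Z_{n+\gamma}$, the term $ChZ_n$ is harmless: substituting the hypothesis for $Z_n$ gives
\begin{equation*}
ChZ_n \leq C^2h\left(h\sum_{k<n}W_k + h^p\right) \leq C'\left(h\sum_{k<n}W_k + h^{p}\right)
\end{equation*}
for $h\le 1$. Adding the two inequalities and using $Z_k, Z_{k+\gamma}\le W_k$ (all terms are positive), we get
\begin{equation*}
W_n \leq K\left(h\sum_{k=0}^{n-1}W_k + h^{p}\right),
\end{equation*}
with $K$ independent of $n$, $N$ and $h$.

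\textbf{Step 2: Gronwall induction.} I will prove $W_n \le Kh^p(1+Kh)^n$ by strong induction. The case $n=0$ is immediate. For the inductive step,
\begin{equation*}
W_n \leq Kh^p\left(1 + Kh\sum_{k<n}(1+Kh)^k\right) = Kh^p(1+Kh)^n,
\end{equation*}
using the geometric sum $Kh\sum_{k<n}(1+Kh)^k = (1+Kh)^n - 1$. Since $nh\le T$, this yields $W_n \le Ke^{KT}h^p$ uniformly in $n$.

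\textbf{Step 3: conclude.} Finally, apply the original inequality for $Z_N$ once more:
\begin{equation*}
Z_N \le C\left(h\sum_{k<N}W_k + h^p\right) \le C\left(TKe^{KT}+1\right)h^p = \mathcal{O}(h^p).
\end{equation*}
The only care needed is that the constants do not depend on $N$. This holds because the partition is uniform with $Nh = T$; the non-uniform refined partition only contributes the fixed factors $\gamma$ and $1-\gamma$, which have already been absorbed into $C$.
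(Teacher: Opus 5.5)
The paper does not actually prove this proposition; it is stated without proof as an adaptation of the corresponding result of Ewald et al. Your argument is the standard discrete Gronwall induction that adaptation calls for, and it is carried out correctly: the merged quantity $W_n=Z_n+Z_{n+\gamma}$, the induction $W_n\le Kh^p(1+Kh)^n$, and the final use of the $Z_N$ inequality are all sound, with constants independent of $N$ because $Nh=T$.

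Your added hypothesis on $Z_{n+\gamma}$ is a necessity, not a convenience. Take $C=1$, $Z_{k+\gamma}\equiv 1$, $Z_0=h^p$ and $Z_n=nh$ for $n\ge 1$. The printed inequality then holds for every $n$, yet $Z_N=T$, so the statement as written is false. The convergence theorem derives only the inequality for $Z_n$ before invoking the proposition, so it also leaves the companion bound for $Z_{n+\gamma}$ unproved.

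One small refinement: the one-step relation $e_{n+\gamma}=e_n+\cdots$ most directly gives a bound for $Z_{n+\gamma}$ in which $Z_n$ appears with an $O(1)$ coefficient rather than as $hZ_n$. Your Step 1 handles that weaker hypothesis unchanged, since you substitute the bound for $Z_n$ anyway.
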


To conclude with the previous results, we state the following Lemma reproduced from \cite{Buckwar2021}, useful for the implicitness of the scheme:

\begin{lemma}\label{lem1}
Under Hypotheses~\ref{H} with $p = 2$, given $y_{1}, y_{2} \in \mathbb{R}^{d}$ and $h \in \left(0, \frac{1}{2\beta}\right)$, with $\beta = \max\left(C + \frac{1}{2}, C_{1}\right)$, let $x_{1}, x_{2} \in \mathbb{R}^{n}$ satisfy the implicit equation
\begin{equation*}
x_{i} - ha\left(t, x_{i}\right) = y_{i}, i = 1, 2.
\end{equation*}

Then, $x_{1}$ and $x_{2}$ exist, are unique and satisfy the inequality
\begin{equation*}
\left(1 - 2hC\right)\left\|x_{1} - x_{2}\right\|^{2} \leq \left\|y_{1} - y_{2}\right\|^{2}.
\end{equation*}
\end{lemma}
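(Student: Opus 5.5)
Fix $t$ and write $G(x) = x - ha\left(t, x\right)$, so the lemma says that $G(x) = y$ is uniquely solvable and that $G^{-1}$ is Lipschitz with an explicit constant. Both parts will rest on one monotonicity estimate for $G$, obtained from the one-sided Lipschitz condition \eqref{Lips} in (H.2$^{\ast}$). For any $x_{1}, x_{2}$,
\begin{equation*}
\left\langle G(x_{1}) - G(x_{2}), x_{1} - x_{2}\right\rangle = \left\|x_{1} - x_{2}\right\|^{2} - h\left\langle a(t, x_{1}) - a(t, x_{2}), x_{1} - x_{2}\right\rangle \geq (1 - hC)\left\|x_{1} - x_{2}\right\|^{2}.
\end{equation*}
Since $h < \frac{1}{2\beta} \leq \frac{1}{2C + 1}$, the factor $1 - hC$ is strictly positive. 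Hence $G$ is strongly monotone, and in particular injective, which gives uniqueness.

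For existence I will use that $G$ is continuous, because $a \in {\rm C}^{1,1}$ under (H.1$^{\ast}$). A continuous, strongly monotone map on $\mathbb{R}^{d}$ is surjective; this is the Browder--Minty theorem, or in finite dimensions it follows from coercivity plus a Brouwer degree argument. Coercivity is immediate from the estimate above with $x_{2} = 0$:
\begin{equation*}
\left\langle G(x), x\right\rangle \geq (1 - hC)\left\|x\right\|^{2} - \left\|G(0)\right\|\left\|x\right\|.
\end{equation*}
Alternatively, I could argue with the one-sided growth bound \eqref{glboun}. Either route gives existence of $x_{i}$ for each $y_{i}$.

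For the inequality I will avoid dividing by a norm and expand a square instead. Set $e = x_{1} - x_{2}$ and $\delta = h\left(a(t, x_{1}) - a(t, x_{2})\right)$, so that $y_{1} - y_{2} = e - \delta$. Then
\begin{equation*}
\left\|y_{1} - y_{2}\right\|^{2} = \left\|e\right\|^{2} - 2\left\langle \delta, e\right\rangle + \left\|\delta\right\|^{2} \geq \left\|e\right\|^{2} - 2hC\left\|e\right\|^{2} = (1 - 2hC)\left\|x_{1} - x_{2}\right\|^{2},
\end{equation*}
using \eqref{Lips} for the cross term and simply dropping $\left\|\delta\right\|^{2} \geq 0$. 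The condition $h < \frac{1}{2\beta}$ ensures $1 - 2hC > 0$, so the bound is meaningful. This bound also re-proves uniqueness.

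The only delicate point is existence, namely justifying surjectivity without a global Lipschitz constant for $a$ (the drift is excluded from the Lipschitz assumption in (H.1$^{\ast}$)). I would cite the Browder--Minty / uniform-monotonicity theorem, as in \cite{Buckwar2021}. The appearance of $C_{1}$ in $\beta$ is needed only to keep the later convergence analysis consistent; it plays no role in the present argument.
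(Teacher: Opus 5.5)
Your proof is correct. The paper gives no argument of its own for this lemma; it states it as reproduced from \cite{Buckwar2021}. What you have written is therefore a self-contained substitute for that citation, and it is the standard monotone-operator proof. First, \eqref{Lips} makes $G(x)=x-ha(t,x)$ strongly monotone with constant $1-hC>0$, which gives uniqueness. Second, continuity of $a$ in $x$ together with the resulting coercivity gives surjectivity by the uniform monotonicity (Browder--Minty) theorem. Third, expanding $\|e-\delta\|^2$ and dropping $\|\delta\|^2$ gives the estimate.

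Citing keeps the paper short. Your version shows what is actually used: only \eqref{Lips} and continuity of $a(t,\cdot)$. Solvability needs just $hC<1$, and the estimate becomes informative once $2hC<1$. As you observe, neither $C_1$ nor the growth constants enter.

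The one thing to drop or repair is your aside about \eqref{glboun}. That route needs $hK<1$, and $K$ is not controlled by $h<1/(2\beta)$. If you want a growth-bound argument, derive the bound from \eqref{Lips} instead: $\langle a(t,x),x\rangle\le (C+\tfrac12)\|x\|^2+\tfrac12\|a(t,0)\|^2$. Its coercivity requirement is $h(C+\tfrac12)<1$, which $h<1/(2\beta)$ guarantees. This is presumably where the term $C+\tfrac12$ in $\beta$ comes from.
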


\begin{remark}
Since $h$ is the discretization parameter, by definition it tends to zero. This implies that the condition of the previous lemma, $h \in \left(0, \frac{1}{2\beta}\right)$, is not restrictive, as we are only concerned with small values of $h$. In fact, the condition of the previous lemma can be equivalently rewritten as
\begin{equation}\label{lemmaeq}
\left\|x_{1} - x_{2}\right\|^{2} \leq K\left\|y_{1} - y_{2}\right\|^{2}.
\end{equation}
\end{remark}

Before proving that our scheme has order 2, we show that the numerical approximation \eqref{trbdf2} has bounded second moment.

\begin{proposition}
Suppose that Hypotheses~\ref{H} are fulfilled for $p = 2$ and
\begin{equation}
E\left[\left\|Y_{0}\right\|^{2}\right] < \infty.
\label{inicial}
\end{equation}

Then,
\begin{equation*}
E\left[\sup_{0 \leq n \leq N}\left\|Y_{n}\right\|^{2}\right] < \infty.
\end{equation*}
\end{proposition}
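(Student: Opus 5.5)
We prove the bound for the two-stage scheme \eqref{trbdf2} by treating each stage as an implicit equation of the type in Lemma~\ref{lem1}, then closing with a discrete Gronwall argument in the spirit of Proposition~\ref{bueno3}.

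\emph{Rewriting the stages.} The first stage reads
\begin{equation*}
Y_{n+\gamma} - \tfrac{1}{2}h\gamma\, a\left(t_{n+\gamma}, Y_{n+\gamma}\right) = Y_{n} + \tfrac{1}{2}h\gamma\,\hat{a}_{n} + \hat{P}_{n} \eqqcolon G_{n}.
\end{equation*}
The second stage reads
\begin{equation*}
Y_{n+1} - h\gamma_{2}\, a\left(t_{n+1}, Y_{n+1}\right) = \gamma_{3}Y_{n+\gamma} + \left(1-\gamma_{3}\right)Y_{n} + \hat{Q}_{n} + \hat{Q}_{n+\gamma} \eqqcolon H_{n}.
\end{equation*}
For $h$ small, Lemma~\ref{lem1} applied with step $\tfrac{1}{2}h\gamma$ (resp. $h\gamma_{2}$) gives existence and uniqueness of $Y_{n+\gamma}$ and $Y_{n+1}$. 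Hence both are well defined, measurable functions of $(Y_{n},\Delta W,I_{\alpha})$, and in particular $\mathcal{F}_{t_{n+\gamma}}$- and $\mathcal{F}_{t_{n+1}}$-measurable respectively. For the moment bound I will not compare two solutions. Instead I take the inner product of the implicit equation $x - ha(t,x) = y$ with $x$ and use the one-sided growth bound \eqref{glboun}:
\begin{equation*}
\|x\|^{2} = \langle y, x\rangle + h\langle a(t,x),x\rangle \le \tfrac{1}{2}\|y\|^{2} + \tfrac{1}{2}\|x\|^{2} + hK\left(1+\|x\|^{2}\right).
\end{equation*}
This yields $(1-2hK)\|x\|^{2}\le \|y\|^{2}+2hK$, so $\|Y_{n+\gamma}\|^{2}\le C(\|G_{n}\|^{2}+h)$ and $\|Y_{n+1}\|^{2}\le C(\|H_{n}\|^{2}+h)$ for $h<1/(4K)$.

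\emph{Avoiding a constant-factor loss.} The bound just obtained cannot be iterated naively, since a factor $C>1$ per step explodes over $N$ steps. So the real plan is to write $Y_{n+1}$ as $Y_{n}$ plus increments and telescope, as in the proof of Proposition~\ref{bueno3}. Concretely, I sum the scheme from $0$ to $l-1$ to express $Y_{l}$ as
\[
Y_{0} + \text{(drift sums with factors } h\text{)} + \text{(sums of stochastic terms } {}_{\alpha}F(\cdot,Y_{k})I_{\alpha}\text{)}.
\]
The coefficients $\gamma_{3}$ and $(1-\gamma_{3})$ are handled by substituting the first stage into the second, exactly as was done to obtain \eqref{eqex}. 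The drift terms evaluated at implicit points, such as $h\,a(t_{k+1},Y_{k+1})$, only satisfy a one-sided bound, so I will control them by squaring the identity $Y_{k+1}-h\gamma_{2}a_{k+1}=H_{k}$ and using \eqref{glboun}. This produces $\|Y_{k+1}\|^{2}-\|Y_{k}\|^{2}$ increments of order $h(1+\|Y_{k}\|^{2}+\|Y_{k+\gamma}\|^{2})$ plus martingale terms. The explicit $\hat a_n$ terms inside $G_n$ need linear growth, which (H.1$^{\ast}$) provides.

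\emph{Stochastic sums and Gronwall.} For the stochastic sums I use the analogue of Proposition~\ref{bueno2} with $Y$ in place of $X$. All ${}_{\alpha}F$ with $\alpha\neq(0)$ have linear growth by (H.1$^{\ast}$), so Doob's (Burkholder) inequality and the It\^o isometry give
\[
E\Bigl[\sup_{l\le n}\Bigl\|\sum_{k<l}{}_{\alpha}F(t_{k},Y_{k})I_{\alpha,k}\Bigr\|^{2}\Bigr] \le Ch\sum_{k<n}\bigl(1+E\|Y_{k}\|^{2}\bigr).
\]
This uses that $Y_{k}$ is $\mathcal F_{t_k}$-measurable and independent of the increments on $[t_k,t_{k+1}]$. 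Setting $Z_{n}=E[\sup_{l\le n}\|Y_{l}\|^{2}]$ and $Z_{n+\gamma}$ analogously, the two bounds combine into
\[
Z_{n}\le C\Bigl(1+E\|Y_{0}\|^{2}+h\sum_{k<n}(Z_{k}+Z_{k+\gamma})\Bigr), \qquad Z_{n+\gamma}\le C(1+Z_{n}).
\]
The argument of Proposition~\ref{bueno3} with $p=0$ (a discrete Gronwall inequality) then gives $Z_{N}\le C(1+E\|Y_{0}\|^{2})<\infty$ by \eqref{inicial}.

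\emph{Main obstacle.} The Gronwall step presupposes that $Z_{n}$ is finite before we bound it. To guarantee this I will first work with the stopped quantities $\|Y_{k}\|\wedge R$ (or with stopping at the first exit from a ball of radius $R$), obtain bounds uniform in $R$, and then let $R\to\infty$ by Fatou's lemma. The other delicate point is that the drift is only one-sided Lipschitz, so every implicit drift term must be absorbed through the inner-product trick above rather than bounded in norm.
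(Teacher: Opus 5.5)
Your plan is correct in substance and follows the same outline as the paper's proof. You substitute the first stage into the second and telescope to $Y_n=Y_0+\sum_{k<n}(\cdots)+h\gamma_2\hat a_n$. You control the implicit term with the one-sided bound \eqref{glboun}, bound the drift sums by linear growth and the stochastic sums by martingale/independence estimates, and close with a discrete Gronwall inequality. Your first-stage estimate $Z_{n+\gamma}\le C(1+Z_n)$ is an improvement on the paper. The paper invokes Gronwall with $h\sum_{k<n}Z_{k+\gamma}$ still on the right-hand side and never links it to $Z_k$, and Proposition~\ref{bueno3} as stated has no such hypothesis, so this step is needed.

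Your treatment of the implicit drift terms, however, conflates two different arguments and rests on a false premise. Under (H.1$^{\ast}$), $a={}_{(0)}F$ with $(0)\in\mathcal{A}_2$ does satisfy global linear growth; only its Lipschitz property is relaxed. So after the additive telescoping, the terms $h\hat a_{k+1}$ with $k+1<n$ are just $h\hat a_j$ with $j<n$, and they are bounded like the explicit ones. Only the last term $h\gamma_2\hat a_n$ needs the inner product with $Y_n$, and this is done once, as in the paper.

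Squaring $Y_{k+1}-h\gamma_2\hat a_{k+1}=H_k$ at every step is a different argument. It is an energy estimate telescoping $\|Y_k\|^2$, and it cannot be inserted into the additive representation of $Y_l$. That route can be completed on its own, but it requires the following:
\begin{itemize}
\item expand $H_k=Y_k+\gamma_3(Y_{k+\gamma}-Y_k)+\hat Q_k+\hat Q_{k+\gamma}$, noting that $\gamma_3>1$ because $\gamma(2-\gamma)=1-(1-\gamma)^2<1$;
\item treat $\langle Y_k,\gamma_3\hat P_k+\hat Q_k+\hat Q_{k+\gamma}\rangle$ by Burkholder plus absorption;
\item bound $|\langle Y_k,\hat a_{k+\gamma}\rangle|\le C(1+\|Y_k\|^2+\|Y_{k+\gamma}\|^2)$, which is linear growth of $a$ at an implicit point, exactly what you said was unavailable.
\end{itemize}

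Finally, no stopping or truncation is needed, and $\|Y_k\|\wedge R$ does not satisfy the scheme anyway. In the additive route the right-hand side involves only indices $k<n$, so finiteness propagates by induction. Moreover, for fixed $h$, iterating your one-step bounds $N$ times already gives $E[\sup_{n\le N}\|Y_n\|^2]\le\sum_{n\le N}E[\|Y_n\|^2]<\infty$, which is all the statement literally claims. The Gronwall argument is what yields the $h$-uniform bound $Ce^{CT}$ that the paper derives.
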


\begin{proof}
From \eqref{trbdf2} we have
\begin{equation*}
Y_{k + 1} = Y_{k} + \frac{1}{2}\left(\hat{a}_{k} + \hat{a}_{k + \gamma}\right)h\gamma_{3}\gamma + \gamma_{3}\hat{P}_{k} + \hat{a}_{k + 1}h\gamma_{2} + \hat{Q}_{k} + \hat{Q}_{k + \gamma}.
\end{equation*}

\noindent Then, summing
\begin{equation*}
\sum_{k = 0}^{n - 1}Y_{k + 1} = \sum_{k = 0}^{n - 1}\left(Y_{k} + \frac{1}{2}\left(\hat{a}_{k} + \hat{a}_{k + \gamma}\right)h\gamma_{3}\gamma + \gamma_{3}\hat{P}_{k} + \hat{a}_{k + 1}h\gamma_{2} + \hat{Q}_{k} + \hat{Q}_{k + \gamma}\right).
\end{equation*}

\noindent We can rewrite it as
\begin{equation*}
Y_{n} = Y_{0} + \frac{1}{2}\sum_{k = 0}^{n - 1}\left(\left(\hat{a}_{k} + \hat{a}_{k + \gamma}\right)h\gamma_{3}\gamma + 2\hat{a}_{k + 1}h\gamma_{2}\right) + \sum_{k = 0}^{n - 1}\left(\gamma_{3}\hat{P}_{k} + \hat{Q}_{k} + \hat{Q}_{k + \gamma}\right).
\end{equation*}

\noindent Expanding the first summation, we obtain
\begin{equation*}
\sum_{k = 0}^{n - 1}\left(\left(\hat{a}_{k} + \hat{a}_{k + \gamma}\right)h\gamma_{3}\gamma + 2\hat{a}_{k + 1}h\gamma_{2}\right) = \sum_{k = 0}^{n - 1}\left(\hat{a}_{k}A_{\gamma}^{k} + \hat{a}_{k + \gamma}h\gamma_{3}\gamma\right) + 2\hat{a}_{n}h\gamma_{2},
\end{equation*}
\noindent with
\begin{equation*}
A_{\gamma}^{k} = \left\{\begin{array}{ll}
\displaystyle \gamma_{3}\gamma, & \displaystyle k = 0,\\
\displaystyle \gamma_{3}\gamma + 2\gamma_{2}, & \displaystyle k = 1, \ldots, n - 1.
\end{array}\right.
\end{equation*}

\noindent We rewrite the expression of $Y_{n}$ as
\begin{equation*}
Y_{n} = Y_{0} + \sum_{k = 0}^{n - 1}\left(\frac{1}{2}\hat{a}_{k}A_{\gamma}^{k}h + \frac{1}{2}\hat{a}_{k + \gamma}h\gamma_{3}\gamma + \gamma_{3}\hat{P}_{k} + \hat{Q}_{k} + \hat{Q}_{k + \gamma}\right) + \hat{a}_{n}h\gamma_{2}.
\end{equation*}

\noindent Taking inner product:
\begin{equation*}
\left\|Y_{n}\right\|^{2} = \left\langle Y_{0} + \sum_{k = 0}^{n - 1}\left(\frac{1}{2}\hat{a}_{k}A_{\gamma}^{k}h + \frac{1}{2}\hat{a}_{k + \gamma}h\gamma_{3}\gamma + \gamma_{3}\hat{P}_{k} + \hat{Q}_{k} + \hat{Q}_{k + \gamma}\right), Y_{n}\right\rangle + \left\langle\hat{a}_{n}h\gamma_{2}, Y_{n}\right\rangle.
\end{equation*}

\noindent Using \eqref{glboun}
\begin{equation*}
\begin{array}{l}
\displaystyle \left\|Y_{n}\right\|^{2} \leq \frac{1}{2}\left\|Y_{0} + \sum_{k = 0}^{n - 1}\left(\frac{1}{2}\hat{a}_{k}A_{\gamma}^{k}h + \frac{1}{2}\hat{a}_{k + \gamma}h\gamma_{3}\gamma + \gamma_{3}\hat{P}_{k} + \hat{Q}_{k} + \hat{Q}_{k + \gamma}\right)\right\|^{2} + \frac{1}{2}\left\|Y_{n}\right\|^{2}\\
\displaystyle \mathrel{\phantom{\left\|Y_{n}\right\|^{2} \leq }}+ h\gamma_{2}K\left(1 + \left\|Y_{n}\right\|^{2}\right).
\end{array}
\end{equation*}

\noindent Thus, using the notation $K^{\ast} = \left(1 - 2h\gamma_{2}K\right)^{- 1}$ and let $C$ be an arbitrary constant that may change in each inequality
\begin{equation*}
\left\|Y_{n}\right\|^{2} \leq K^{\ast}\left(\left\|Y_{0} + \sum_{k = 0}^{n - 1}\left(\frac{1}{2}\hat{a}_{k}A_{\gamma}^{k}h + \frac{1}{2}\hat{a}_{k + \gamma}h\gamma_{3}\gamma + \gamma_{3}\hat{P}_{k} + \hat{Q}_{k} + \hat{Q}_{k + \gamma}\right)\right\|^{2} + hC\right).
\end{equation*}

\noindent Expanding the square:
\begin{equation}
\left\|Y_{n}\right\|^{2} \leq C\left(\left\|Y_{0}\right\|^{2} + \left\|\sum_{k = 0}^{n - 1}\hat{a}_{k}h\right\|^{2} + \left\|\sum_{k = 0}^{n - 1}\hat{a}_{k + \gamma}h\right\|^{2} + \left\|\sum_{k = 0}^{n - 1}\hat{P}_{k}\right\|^{2} + \left\|\sum_{k = 0}^{n - 1}\hat{Q}_{k}\right\|^{2} + \left\|\sum_{k = 0}^{n - 1}\hat{Q}_{k + \gamma}\right\|^{2}\right).
\label{yn}
\end{equation}

\noindent Now, we proceed like the proof of Proposition~\ref{bueno1}. Firstly, note that for the deterministic terms the following is satisfied:
\begin{equation*}
\left\|\sum_{k = 0}^{n - 1}\hat{a}_{k}h\right\|^{2} \leq h^{2}N\sum_{k = 0}^{n - 1}\left\|\hat{a}_{k}\right\|^{2} = Th\sum_{k = 0}^{n - 1}\left\|\hat{a}_{k}\right\|^{2} \leq C + Ch\sum_{k = 0}^{n - 1}\left\|Y_{k}\right\|^{2},
\end{equation*}
\begin{equation*}
\left\|\sum_{k = 0}^{n - 1}\hat{a}_{k + \gamma}h\right\|^{2} \leq h^{2}N\sum_{k = 0}^{n - 1}\left\|\hat{a}_{k + \gamma}\right\|^{2} = Th\sum_{k = 0}^{n - 1}\left\|\hat{a}_{k + \gamma}\right\|^{2} \leq C + Ch\sum_{k = 0}^{n - 1}\left\|Y_{k + \gamma}\right\|^{2},
\end{equation*}
\noindent where we have used the growth bound condition in the last inequality.

\noindent On the other hand, for the stochastic terms we use the independent increments property of the stochastic integral to obtain
\begin{equation*}
E\left[\left\|\sum_{k = 0}^{n - 1}\hat{P}_{k}\right\|^{2} + \left\|\sum_{k = 0}^{n - 1}\hat{Q}_{k}\right\|^{2} + \left\|\sum_{k = 0}^{n - 1}\hat{Q}_{k + \gamma}\right\|^{2}\right] \leq C\sum_{k = 0}^{n - 1}E\left[\left\|\hat{P}_{k}\right\|^{2} + \left\|\hat{Q}_{k}\right\|^{2} + \left\|\hat{Q}_{k + \gamma}\right\|^{2}\right].
\end{equation*}

\noindent As mentioned earlier, just as in the proof of Proposition~\ref{bueno1}, it is easy to see that all the stochastic terms satisfy that they are at least of order $h$. Therefore, applying the growth bound condition we obtain
\begin{equation*}
C\sum_{k = 0}^{n - 1}E\left[\left\|\hat{P}_{k}\right\|^{2} + \left\|\hat{Q}_{k}\right\|^{2} + \left\|\hat{Q}_{k + \gamma}\right\|^{2}\right] \leq C + Ch\sum_{k = 0}^{n - 1}E\left[\left\|Y_{k}\right\|^{2}\right] + Ch\sum_{k = 0}^{n - 1}E\left[\left\|Y_{k + \gamma}\right\|^{2}\right].
\end{equation*}

\noindent Finally, taking expected value and supremum in \eqref{yn}, and using \eqref{inicial} and the above inequalities, we conclude
\begin{equation*}
E\left[\sup_{0 \leq l \leq n}\left\|Y_{l}\right\|^{2}\right] \leq C + Ch\sum_{k = 0}^{n - 1}E\left[\sup_{0 \leq l \leq k}\left\|Y_{l}\right\|^{2}\right] + Ch\sum_{k = 0}^{n - 1}E\left[\sup_{0 \leq l \leq k}\left\|Y_{l + \gamma}\right\|^{2}\right]
\end{equation*}

\noindent Using the discrete Gronwall inequality:
\begin{equation*}
E\left[\sup_{0 \leq l \leq N}\left\|Y_{l}\right\|^{2}\right] \leq Ce^{ChN} \leq Ce^{CT},
\end{equation*}
\noindent thus concluding the proof.
\end{proof}

Having presented the preliminary results, we are in a position to prove that the stochastic TR-BDF2 method is of order 2 in the sense of Definition~\ref{def}.

\begin{theorem}
Let $\left\{Y_{n}\right\}_{0 \leq n \leq N}$ be the stochastic TR-BDF2 scheme given by \eqref{trbdf2}. Suppose that Hypotheses~\ref{H} with (H.2$\ast$) is fulfilled for $p = 2$, the initial condition satisfies \eqref{inicial} and
\begin{equation*}
\left(E\left[\left\|X_{0} - Y_{0}\right\|^{2}\right]\right)^{\frac{1}{2}} \leq \mathcal{O}\left(h^{2}\right).
\end{equation*}

Then, $\left\{Y_{n}\right\}_{0 \leq n \leq N}$ converges to $X_{t}$ with order 2.
\end{theorem}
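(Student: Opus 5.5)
We compare the exact expansion \eqref{eqex} with the scheme \eqref{trbdf2} step by step, and then close the estimate with the Gronwall-type Proposition~\ref{bueno3}. Set $e_{n} = X_{t_{n}} - Y_{n}$ and $e_{n + \gamma} = X_{t_{n + \gamma}} - Y_{n + \gamma}$, and let $Z_{n} = E\left[\sup_{0 \leq l \leq n}\left\|e_{l}\right\|^{2}\right]$ and $Z_{n + \gamma} = E\left[\sup_{0 \leq l \leq n}\left\|e_{l + \gamma}\right\|^{2}\right]$.

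\textbf{Step 1: the auxiliary (TR) stage.} Subtracting the first lines of \eqref{eqex} and \eqref{trbdf2} gives
\begin{equation*}
e_{n + \gamma} - \tfrac{1}{2}h\gamma\left(a_{t_{n + \gamma}} - \hat{a}_{n + \gamma}\right) = e_{n} + \tfrac{1}{2}h\gamma\left(a_{t_{n}} - \hat{a}_{n}\right) + \left(P_{t_{n}} - \hat{P}_{n}\right) + R_{n}.
\end{equation*}
The left-hand side has exactly the structure $x - \tfrac{h\gamma}{2}a\left(t, x\right)$ evaluated at $x = X_{t_{n + \gamma}}$ and at $x = Y_{n + \gamma}$. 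Lemma~\ref{lem1}, in the form \eqref{lemmaeq}, therefore bounds $\left\|e_{n + \gamma}\right\|^{2}$ by $K$ times the squared norm of the right-hand side. This is how the implicitness is handled without assuming a global Lipschitz drift, using only (H.2$^{\ast}$).

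The explicit drift difference $a_{t_{n}} - \hat{a}_{n}$ is controlled by $C\left\|e_{n}\right\|$ via the mean value theorem, exactly as in the Remark after Proposition~\ref{bueno1}. It carries a factor $h$. The difference $P_{t_{n}} - \hat{P}_{n}$ is a finite sum of terms $I_{\alpha}\left[{}_{\alpha}F\left(t_{n}, X_{t_{n}}\right) - {}_{\alpha}F\left(t_{n}, Y_{n}\right)\right]$ (multiplied by $h\gamma$ where appropriate), with $l\left(\alpha\right) \geq 1$.

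\textbf{Step 2: the BDF2 stage.} Subtracting the second lines gives
\begin{equation*}
e_{n + 1} - h\gamma_{2}\left(a_{t_{n + 1}} - \hat{a}_{n + 1}\right) = \gamma_{3}e_{n + \gamma} + \left(1 - \gamma_{3}\right)e_{n} + \left(Q_{t_{n}} - \hat{Q}_{n}\right) + \left(Q_{t_{n + \gamma}} - \hat{Q}_{n + \gamma}\right) + R_{n + \gamma}.
\end{equation*}
Single-step bounds cannot be iterated naively, because the constants $K$ would compound to $K^{N}$. To avoid this I would follow the telescoping device used in the bounded-moment proposition just above. Sum the recursion from $0$ to $n - 1$, eliminate $e_{k + \gamma}$ using Step 1, and move the only remaining implicit term $h\gamma_{2}\left(a_{t_{n}} - \hat{a}_{n}\right)$ to the left. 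Then take the inner product with $e_{n}$ and apply the one-sided Lipschitz condition \eqref{Lips}. This gives $\left(1 - 2h\gamma_{2}C\right)\left\|e_{n}\right\|^{2}$ bounded by the squared norm of
\begin{equation*}
e_{0} + \sum_{k}\left(\text{drift differences}\right)h + \sum_{k}\left(\text{stochastic differences}\right) + \sum_{k}\left(\text{remainders}\right).
\end{equation*}
Here the "drift differences" involve $a$ at $t_{k}$ and at $t_{k + \gamma}$, for $k < n$.

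\textbf{Step 3: estimate each sum.} The deterministic sums are bounded by Cauchy--Schwarz plus the mean value theorem:
\begin{equation*}
\Bigl\|\sum_{k} h\left(a_{t_{k}} - \hat{a}_{k}\right)\Bigr\|^{2} \leq Ch\sum_{k}\left\|e_{k}\right\|^{2},
\end{equation*}
and the same holds at the $\gamma$-points. The stochastic-difference sums are bounded by Proposition~\ref{bueno1}, giving $Ch\sum_{k}\left(Z_{k} + Z_{k + \gamma}\right)$; this uses the bounded second moments from the preceding proposition. The remainder sums $\sum_{k}R_{k}$ and $\sum_{k}R_{k + \gamma}$ are finite combinations of the form $\sum_{k}I_{\alpha}\left[{}_{\alpha}F\right]$ with $\alpha \in \mathcal{B}\left(\mathcal{A}_{2}\right)$, together with $h\,\tilde{R}_{1}\left[a\right]$ and $h^{2}\,\tilde{R}_{0}\left[L^{0}a\right]$. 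Proposition~\ref{bueno2} gives each of them a bound $Ch^{\Phi\left(\alpha\right)}$.

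\textbf{Main obstacle: the remainder orders.} The key bookkeeping is to verify that every remainder term has effective exponent at least $4$. For $\alpha \in \mathcal{B}\left(\mathcal{A}_{2}\right)$ one has $l + n \geq 5$, or $l = n = 3$ giving $\Phi = 4$. For the terms $h\,\tilde{R}_{1}\left[a\right]$, the extra factor $h^{2}$ combines with $\Phi \geq 2$ coming from $\mathcal{B}\left(\mathcal{A}_{1}\right)$. Similarly, $h^{4}\,\tilde{R}_{0}\left[L^{0}a\right]$ is of order $h^{4}$. The delicate point is checking that the constants $1 - \gamma_{3}$, $\gamma_{3}$ and $\gamma_{2}$ introduce no lower-order leftovers; the exact cancellations built into the construction of $P$ and $Q$ are what guarantee this.

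Taking the supremum and then expectations yields
\begin{equation*}
Z_{n} \leq C\Bigl(h\sum_{k}Z_{k} + h\sum_{k}Z_{k + \gamma} + h^{4}\Bigr),
\end{equation*}
with $Z_{0} = \mathcal{O}\left(h^{4}\right)$ by hypothesis. Step 1 gives an analogous bound for $Z_{n + \gamma}$ in terms of $Z_{n}$, which can be substituted in. Proposition~\ref{bueno3} with $p = 4$ then gives $Z_{N} = \mathcal{O}\left(h^{4}\right)$, that is, order $2$ in the sense of Definition~\ref{def}.
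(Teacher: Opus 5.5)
Your proposal follows the paper's proof essentially step for step: you subtract \eqref{trbdf2} from \eqref{eqex}, eliminate $e_{k+\gamma}$ and telescope, absorb the implicit drift term with Lemma~\ref{lem1} (or the equivalent one-sided-Lipschitz inner-product argument), bound the difference sums by Proposition~\ref{bueno1} and the remainder sums by Proposition~\ref{bueno2} with the same exponent bookkeeping, and close with Proposition~\ref{bueno3}. Your one addition is to use the TR-stage bound to control $Z_{n+\gamma}$ by $Z_{n}$, $h\sum_{k}Z_{k}$ and $h^{4}$ before the Gronwall step. The paper leaves this implicit, and Proposition~\ref{bueno3} as stated does need some such control of the $Z_{k+\gamma}$, so your version is slightly more complete on that point.
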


\begin{proof}
Let $e_{k} = X_{t_{k}} - Y_{k}$ for $k = 0, \ldots, N$, and $e_{k + \gamma} = X_{t_{k + \gamma}} - Y_{k + \gamma}$ for $k = 0, \ldots, N - 1$. Subtracting the corresponding part of \eqref{trbdf2} from equation \eqref{eqex}, we obtain
\begin{equation*}
e_{k + 1} = \gamma_{3}e_{k + \gamma} + \left(1 - \gamma_{3}\right)e_{k} + \left(a_{t_{k + 1}} - \hat{a}_{k + 1}\right)h\gamma_{2} + Q_{t_{k}} - \hat{Q}_{k} + Q_{t_{k + \gamma}} - \hat{Q}_{k + \gamma} + R_{k + \gamma}.
\end{equation*}

\noindent Analogously, taking \eqref{Xng} and \eqref{trbdf2} we obtain:
\begin{equation*}
e_{k + \gamma} = e_{k} + \frac{1}{2}\left(a_{t_{k}} - \hat{a}_{k}\right)h\gamma + \frac{1}{2}\left(a_{t_{k + \gamma}} - \hat{a}_{k + \gamma}\right)h\gamma + P_{t_{k}} - \hat{P}_{k} + R_{k}.
\end{equation*}

\noindent Then,
\begin{equation*}
\begin{array}{l}
\displaystyle e_{k + 1} = e_{k} + \frac{1}{2}\left(a_{t_{k}} - \hat{a}_{k}\right)h\gamma_{3}\gamma + \frac{1}{2}\left(a_{t_{k + \gamma}} - \hat{a}_{k + \gamma}\right)h\gamma_{3}\gamma + \gamma_{3}\left(P_{t_{k}} - \hat{P}_{k}\right) + \left(a_{t_{k + 1}} - \hat{a}_{k + 1}\right)h\gamma_{2}\\
\displaystyle \mathrel{\phantom{e_{k + 1} = }}+ Q_{t_{k}} - \hat{Q}_{k} + Q_{t_{k + \gamma}} - \hat{Q}_{k + \gamma} + \gamma_{3}R_{k} + R_{k + \gamma}.
\end{array}
\end{equation*}

\noindent Adding from $k = 0$ to $k = n$, we have:
\begin{equation*}
\begin{array}{l}
\displaystyle e_{n + 1} = e_{0} + \sum_{k = 0}^{n}\left(\frac{1}{2}\left(a_{t_{k}} - \hat{a}_{k}\right)h\gamma_{3}\gamma + \frac{1}{2}\left(a_{t_{k + \gamma}} - \hat{a}_{k + \gamma}\right)h\gamma_{3}\gamma + \gamma_{3}\left(P_{t_{k}} - \hat{P}_{k}\right)\right) + \sum_{k = 1}^{n + 1}\left(a_{t_{k}} - \hat{a}_{k}\right)h\gamma_{2}\\
\displaystyle \mathrel{\phantom{e_{n + 1} = }}+ \sum_{k = 0}^{n}\left(Q_{t_{k}} - \hat{Q}_{k} + Q_{t_{k + \gamma}} - \hat{Q}_{k + \gamma} + \gamma_{3}R_{k} + R_{k + \gamma}\right).
\end{array}
\end{equation*}

\noindent Using Lemma~\ref{lem1} and \eqref{lemmaeq}, we obtain
\begin{equation*}
\begin{array}{l}
\displaystyle \left\|e_{n + 1}\right\|^{2} \leq K\left\|e_{0} + \sum_{k = 0}^{n}\left(\left(a_{t_{k}} - \hat{a}_{k}\right)h + \left(a_{t_{k + \gamma}} - \hat{a}_{k + \gamma}\right)h + P_{t_{k}} - \hat{P}_{k} + Q_{t_{k}} - \hat{Q}_{k} + Q_{t_{k + \gamma}} - \hat{Q}_{k + \gamma}\right)\vphantom{ + \sum_{k = 0}^{n}\left(R_{k} + R_{k + \gamma}\right)}\right.\\
\displaystyle \mathrel{\phantom{e_{n + 1} = }}\left.\vphantom{e_{0} + \sum_{k = 0}^{n}\left(\left(a_{t_{k}} - \hat{a}_{k}\right)h + \left(a_{t_{k + \gamma}} - \hat{a}_{k + \gamma}\right)h + P_{t_{k}} - \hat{P}_{k} + Q_{t_{k}} - \hat{Q}_{k} + Q_{t_{k + \gamma}} - \hat{Q}_{k + \gamma}\right) }+ \sum_{k = 0}^{n}\left(R_{k} + R_{k + \gamma}\right)\right\|^{2}.
\end{array}
\end{equation*}

\noindent Set $Z_{n} = E\left[\sup_{0 \leq l \leq n}\left\|e_{l}\right\|^{2}\right]$ and $Z_{n + \gamma} = E\left[\sup_{0 \leq l \leq n}\left\|e_{l + \gamma}\right\|^{2}\right]$, we can deduce that
\begin{equation*}
\begin{array}{l}
\displaystyle Z_{n} \leq C\left(Z_{0} + E\left[\sup_{0 \leq l \leq n}\left(\left\|\sum_{k = 0}^{l - 1}\left(a_{t_{k}} - \hat{a}_{k}\right)h\right\|^{2} + \left\|\sum_{k = 0}^{l - 1}\left(a_{t_{k + \gamma}} - \hat{a}_{k + \gamma}\right)h\right\|^{2} + \left\|\sum_{k = 0}^{l - 1}\left(P_{t_{k}} - \hat{P}_{k}\right)\right\|^{2}\vphantom{ + \left\|\sum_{k = 0}^{l - 1}\left(Q_{t_{k}} - \hat{Q}_{k}\right)\right\|^{2} + \left\|\sum_{k = 0}^{l - 1}\left(Q_{t_{k + \gamma}} - \hat{Q}_{k + \gamma}\right)\right\|^{2} + \left\|\sum_{k = 0}^{l - 1}R_{k}\right\|^{2} + \left\|\sum_{k = 0}^{l - 1}R_{k + \gamma}\right\|^{2}}\right.\right.\right.\\
\displaystyle \mathrel{\phantom{Z_{n} \leq }}\left.\left.\left.\vphantom{Z_{0} + E\left[\sup_{0 \leq l \leq n}\left(\left\|\sum_{k = 0}^{l - 1}\left(a_{t_{k}} - \hat{a}_{k}\right)h\right\|^{2} + \left\|\sum_{k = 0}^{l - 1}\left(a_{t_{k + \gamma}} - \hat{a}_{k + \gamma}\right)h\right\|^{2} + \left\|\sum_{k = 0}^{l - 1}\left(P_{t_{k}} - \hat{P}_{k}\right)\right\|^{2}\right.\right. }+ \left\|\sum_{k = 0}^{l - 1}\left(Q_{t_{k}} - \hat{Q}_{k}\right)\right\|^{2} + \left\|\sum_{k = 0}^{l - 1}\left(Q_{t_{k + \gamma}} - \hat{Q}_{k + \gamma}\right)\right\|^{2} + \left\|\sum_{k = 0}^{l - 1}R_{k}\right\|^{2} + \left\|\sum_{k = 0}^{l - 1}R_{k + \gamma}\right\|^{2}\right)\right]\right).
\end{array}
\end{equation*}

\noindent We study now each term. Firstly, note that all functions that appear in the summations of the above inequality are coefficient functions ${}_{\alpha}F$, for some $\alpha \in \mathcal{A}_{2} \cup \mathcal{B}\left(\mathcal{A}_{2}\right)$, i.e., they verify Hypotheses~\ref{H}.

\noindent In particular, they satisfy the hypotheses of Proposition~\ref{bueno1}, obtaining:
\begin{equation*}
E\left[\sup_{0 \leq l \leq n}\left\|\sum_{k = 0}^{l - 1}\left(a_{t_{k}} - \hat{a}_{k}\right)h\right\|^{2}\right] \leq Ch\sum_{k = 0}^{n - 1}Z_{k},
\end{equation*}
\begin{equation*}
E\left[\sup_{0 \leq l \leq n}\left\|\sum_{k = 0}^{l - 1}\left(a_{t_{k + \gamma}} - \hat{a}_{k + \gamma}\right)h\right\|^{2}\right] \leq Ch\sum_{k = 0}^{n - 1}Z_{k + \gamma},
\end{equation*}
\begin{equation*}
E\left[\sup_{0 \leq l \leq n}\left\|\sum_{k = 0}^{l - 1}\left(P_{t_{k}} - \hat{P}_{k}\right)\right\|^{2}\right] \leq Ch\sum_{k = 0}^{n - 1}Z_{k},
\end{equation*}
\begin{equation*}
E\left[\sup_{0 \leq l \leq n}\left\|\sum_{k = 0}^{l - 1}\left(Q_{t_{k}} - \hat{Q}_{k}\right)\right\|^{2}\right] \leq Ch\sum_{k = 0}^{n - 1}Z_{k},
\end{equation*}
\begin{equation*}
E\left[\sup_{0 \leq l \leq n}\left\|\sum_{k = 0}^{l - 1}\left(Q_{t_{k + \gamma}} - \hat{Q}_{k + \gamma}\right)\right\|^{2}\right] \leq Ch\sum_{k = 0}^{n - 1}Z_{k + \gamma}.
\end{equation*}

\noindent On the other hand, let us study the remaining terms. Using Proposition~\ref{bueno2} in each summation of the remainder sets, it holds:
\begin{equation*}
E\left[\sup_{0 \leq l \leq n}\left\|\sum_{k = 0}^{l - 1}\tilde{R}_{0}\left[L^{0}a\right]_{t_{k}, t_{k + \gamma}}\right\|^{2}\right]h^{4} \leq Ch^{0}\left(1 + \left\|X_{0}\right\|^{2}\right)h^{4},
\end{equation*}
\begin{equation*}
E\left[\sup_{0 \leq l \leq n}\left\|\sum_{k = 0}^{l - 1}\tilde{R}_{1}\left[a\right]_{t_{k}, t_{k + \gamma}}\right\|^{2}\right]h^{2} \leq Ch^{2}\left(1 + \left\|X_{0}\right\|^{2}\right)h^{2},
\end{equation*}
\begin{equation*}
E\left[\sup_{0 \leq l \leq n}\left\|\sum_{k = 0}^{l - 1}\tilde{R}_{1}\left[a\right]_{t_{k + \gamma}, t_{k + 1}}\right\|^{2}\right]h^{2} \leq Ch^{2}\left(1 + \left\|X_{0}\right\|^{2}\right)h^{2},
\end{equation*}
\begin{equation*}
E\left[\sup_{0 \leq l \leq n}\left\|\sum_{k = 0}^{l - 1}\tilde{R}_{2}\left[F\right]_{t_{k}, t_{k + \gamma}}\right\|^{2}\right] \leq Ch^{4}\left(1 + \left\|X_{0}\right\|^{2}\right),
\end{equation*}
\begin{equation*}
E\left[\sup_{0 \leq l \leq n}\left\|\sum_{k = 0}^{l - 1}\tilde{R}_{2}\left[F\right]_{t_{k + \gamma}, t_{k + 1}}\right\|^{2}\right] \leq Ch^{4}\left(1 + \left\|X_{0}\right\|^{2}\right).
\end{equation*}

\noindent Overall, we obtain
\begin{equation*}
Z_{n} \leq C\left(h\sum_{k = 0}^{n - 1}Z_{k} + h\sum_{k = 0}^{n - 1}Z_{k + \gamma} + Z_{0} + h^{4}\left(1 + \left\|X_{0}\right\|^{2}\right)\right).
\end{equation*}

\noindent Finally, using the hypothesis about the initial condition, we obtain:
\begin{equation*}
Z_{n} \leq C\left(h\sum_{k = 0}^{n - 1}Z_{k} + h\sum_{k = 0}^{n - 1}Z_{k + \gamma} + \mathcal{O}\left(h^{4}\right)\right).
\end{equation*}

\noindent Applying Proposition~\ref{bueno3} and taking the square root, the result follows.
\end{proof}

\section{Numerical stability}\label{s5}
In this section we will study the stability property of the proposed method. It is well known that for stiff problems, for example those known as fast slow problems, a very small time step is necessary to solve the fast variables, but this is due to reasons of having a good error behavior. However, in these problems there is an additional condition, it is necessary to choose this time step not only for error behavior but also for stability reasons.

In the literature, two types of stability are commonly studied, both aim to generalize the stability analysis in the deterministic framework, where the complex-valued linear differential equations
\begin{equation*}
x' = \lambda x,
\end{equation*}
\noindent are studied, where $\lambda \in \mathbb{C}$, with $\Re\left(\lambda\right) < 0$.

The first extension corresponds to consider the class of complex-valued linear test equations with additive noise. This case has been studied in depth by Kloeden and Platen in \cite{Kloeden1992}. They take the following equation
\begin{equation}
{\rm d}X_{t} = \lambda X_{t}{\rm d}t + \sigma {\rm d}W_{t},
\label{test1}
\end{equation}
\noindent where $\lambda$ and $\sigma$ are complex numbers.

Using this equation, a first concept of numerical stability is defined: $A$-stability.

\begin{definition}
We consider a numerical scheme such that, applied to the test equation \eqref{test1} can be written in the recursive form as
\begin{equation}
Y_{n + 1} = G\left(z\right)Y_{n} + V_{n},
\label{recur}
\end{equation}
\noindent where any $V_{n}$ does not depend on any $Y_{n}$, $n = 0, 1, \ldots, N - 1$ and $G$ is a mapping of the complex plane into itself. We call the region of absolute stability of the scheme to the set of complex numbers
\begin{equation*}
\left\{z \in \mathbb{C} : \left|G\left(z\right)\right| < 1\right\}.
\end{equation*}

Then, a stochastic scheme is $A$-stable if its region of absolute stability is the left half of the complex plane.
\end{definition}

Note that this definition of $A$-stability is the same as in the deterministic case when the noise is zero.

A second extension proposed by Lord et al. \cite{Lord2014} consists of adding a multiplicative noise giving rise to another stability concept: $MS$-stability. In this case the test equation is:
\begin{equation}
{\rm d}X_{t} = \lambda X_{t}{\rm d}t + \sigma X_{t}{\rm d}W_{t}.
\label{test2}
\end{equation}

\begin{definition}
We consider a numerical scheme such that, applied to the test equation \eqref{test2} can be written in the recursive form as
\begin{equation*}
Y_{n + 1} = R\left(h, \lambda, \sigma, I_{\alpha}\right)Y_{n},
\end{equation*}
\noindent where $I_{\alpha}$ represents the integrals considered in that scheme. Then, a scheme is $MS$-stable if and only if
\begin{equation*}
E\left[\left|R\left(h, \lambda, \sigma, I_{\alpha}\right)\right|^{2}\right] < 1.
\end{equation*}
\end{definition}

Having defined these concepts of stability for stochastic differential systems, we now turn to the study of these concepts for the developed method.

\subsection{$A$-stability}
This type of stability by definition is strongly associated with the deterministic part of the equation. Let us see that the developed method has this stability property.

\begin{proposition}
The stochastic TR-BDF2 is $A$-stable.
\end{proposition}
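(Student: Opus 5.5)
We apply the scheme \eqref{trbdf2} to the additive-noise test equation \eqref{test1} with $d=m=1$, $a(t,x)=\lambda x$, $b(t,x)=\sigma$, and show it takes the recursive form \eqref{recur} with $G(z)$ equal to the deterministic TR-BDF2 amplification factor, where $z=\lambda h$. A-stability then follows from the known deterministic result for $\gamma=2-\sqrt2$ (indeed for any $\gamma\in(0,1)$) cited from \cite{Bank1985}.

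The first step is to compute the coefficient functions for this equation. Here $L^{1}a=\sigma\lambda$ is constant, and $L^{1}L^{1}a=0$ because $\partial_x^2 a=0$ and $L^1$ of a constant vanishes. Likewise $L^{0}b=0$ and $L^{1}b=0$, so every higher coefficient ${}_{\alpha}F$ with $l(\alpha)\neq n(\alpha)$ reduces to a constant multiple of $\sigma$. Examples are $\sigma$, $\lambda\sigma$ for $\alpha=(1,0)$, and $\lambda^2\sigma$ for $(1,0,0)$ if present. Consequently $S_2[Y_n]$, $S_2[Y_{n+\gamma}]$, $\hat P_n$, $\hat Q_n$ and $\hat Q_{n+\gamma}$ are all independent of $Y_n$ and $Y_{n+\gamma}$: each is a linear combination of multiple Itô integrals with deterministic coefficients depending only on $\lambda,\sigma,h,\gamma$. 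This is the key structural point, and checking it term by term in $\mathcal A_2$ is the only real work.

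Next we solve the two implicit stages. The first stage gives
\[
\Bigl(1-\tfrac{z\gamma}{2}\Bigr)Y_{n+\gamma}=\Bigl(1+\tfrac{z\gamma}{2}\Bigr)Y_n+\hat P_n .
\]
The second stage gives
\[
(1-z\gamma_2)Y_{n+1}=\gamma_3Y_{n+\gamma}+(1-\gamma_3)Y_n+\hat Q_n+\hat Q_{n+\gamma}.
\]
Here $\hat Q_{n+\gamma}$ is noise-only, so no $Y_{n+\gamma}$ enters it. The denominators are nonzero for $\Re z\le0$, since $1-z\gamma/2$ and $1-z\gamma_2$ have positive real part. Substituting the first stage into the second yields
\[
Y_{n+1}=G(z)Y_n+V_n,\qquad G(z)=\frac{\gamma_3\frac{1+z\gamma/2}{1-z\gamma/2}+1-\gamma_3}{1-z\gamma_2},
\]
where $V_n$ collects the noise terms, divided by the same denominators, and does not depend on $Y_n$. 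We then observe that $G$ coincides with the stability function of \eqref{trbdf2deter}.

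It remains to show $|G(z)|<1$ for $\Re z<0$. I would cite \cite{Bank1985}, or verify directly with $\gamma=2-\sqrt2$. For the direct check, $G$ is analytic on the closed left half-plane because its poles $2/\gamma$ and $1/\gamma_2$ are positive reals, and $G(z)\to0$ as $|z|\to\infty$. By the maximum principle it suffices to show $|G(iy)|\le1$ on the imaginary axis. This reduces to a polynomial inequality $|N(iy)|^2\le|D(iy)|^2$ in $y^2$. The main obstacle is this algebraic check if done by hand, but it is classical, so I would rely on the deterministic result. We also need strict inequality in the open half-plane. This follows because $|G|$ is nonconstant and the maximum modulus principle applies.
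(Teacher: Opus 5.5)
Your proposal is correct and follows essentially the same route as the paper. In both, the scheme is applied to the additive-noise test equation, where all noise contributions $\hat P_n,\hat Q_n,\hat Q_{n+\gamma}$ reduce to combinations of $\sigma\Delta W$ and $\sigma\lambda I_{(1,0)}$ that do not depend on $Y$. Solving the two linear implicit stages gives $Y_{n+1}=G(z)Y_n+V_n$ with $G$ equal to the deterministic TR-BDF2 stability function, and $A$-stability is then inherited from the deterministic method.

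Your optional direct check goes slightly beyond the paper, which only cites the deterministic result, and it is sound: on the imaginary axis one finds $|D(iy)|^2-|N(iy)|^2=\gamma^2(1-\gamma)^2y^4\ge 0$, which yields the conclusion for every $\gamma\in(0,1)$.
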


\begin{proof}
The stochastic TR-BDF2 applied to the stochastic differential equation \eqref{test1} is
\begin{equation*}
\begin{array}{l}
\displaystyle Y_{n + 1} = \gamma_{3}Y_{n + \gamma} + \left(1 - \gamma_{3}\right)Y_{n} + Y_{n + 1}z\gamma_{2} + \Delta W_{t_{n}, t_{n + \gamma}}\sigma\left(1 - z\gamma\right)\left(1 - \gamma_{3}\right) + I_{\left(1, 0\right), t_{n}, t_{n + \gamma}}\sigma\lambda\left(1 - \gamma_{3}\right)\\
\displaystyle \mathrel{\phantom{Y_{n + 1} = }}+ \Delta W_{t_{n + \gamma}, t_{n + 1}}\sigma\left(1 - z\gamma_{2}\right) + I_{\left(1, 0\right), t_{n + \gamma}, t_{n + 1}}\sigma\lambda,
\end{array}
\end{equation*}
\noindent with $Y_{n + \gamma}$ solution of
\begin{equation*}
Y_{n + \gamma} = Y_{n} + \frac{1}{2}\left(Y_{n} + Y_{n + \gamma}\right)z\gamma + \Delta W_{t_{n}, t_{n + \gamma}}\sigma\left(1 - \frac{1}{2}z\gamma\right) + I_{\left(1, 0\right), t_{n}, t_{n + \gamma}}\sigma\lambda.
\end{equation*}

\noindent Combining the previous equations and solving $Y_{n + 1}$:
\begin{equation*}
\begin{array}{l}
\displaystyle Y_{n + 1} = \frac{1}{1 - z\gamma_{2}}\left(\frac{\gamma_{3}}{1 - \frac{1}{2}z\gamma}\left(Y_{n}\left(1 + \frac{1}{2}z\gamma\right) + \Delta W_{t_{n}, t_{n + \gamma}}\sigma\left(1 - \frac{1}{2}z\gamma\right) + I_{\left(1, 0\right), t_{n}, t_{n + \gamma}}\sigma\lambda\right)\vphantom{ + \left(1 - \gamma_{3}\right)Y_{n} + \Delta W_{t_{n}, t_{n + \gamma}}\sigma\left(1 - z\gamma\right)\left(1 - \gamma_{3}\right) + I_{\left(1, 0\right), t_{n}, t_{n + \gamma}}\sigma\lambda\left(1 - \gamma_{3}\right) + \Delta W_{t_{n + \gamma}, t_{n + 1}}\sigma\left(1 - z\gamma_{2}\right) + I_{\left(1, 0\right), t_{n + \gamma}, t_{n + 1}}\sigma\lambda}\right.\\
\displaystyle \mathrel{\phantom{Y_{n + 1} = }}+ \left(1 - \gamma_{3}\right)Y_{n} + \Delta W_{t_{n}, t_{n + \gamma}}\sigma\left(1 - z\gamma\right)\left(1 - \gamma_{3}\right) + I_{\left(1, 0\right), t_{n}, t_{n + \gamma}}\sigma\lambda\left(1 - \gamma_{3}\right)\\
\displaystyle \mathrel{\phantom{Y_{n + 1} = }}\left.\vphantom{\frac{\gamma_{3}}{1 - \frac{1}{2}z\gamma}\left(Y_{n}\left(1 + \frac{1}{2}z\gamma\right) + \Delta W_{t_{n}, t_{n + \gamma}}\sigma\left(1 - \frac{1}{2}z\gamma\right) + I_{\left(1, 0\right), t_{n}, t_{n + \gamma}}\sigma\lambda\right) + \left(1 - \gamma_{3}\right)Y_{n} + \Delta W_{t_{n}, t_{n + \gamma}}\sigma\left(1 - z\gamma\right)\left(1 - \gamma_{3}\right) + I_{\left(1, 0\right), t_{n}, t_{n + \gamma}}\sigma\lambda\left(1 - \gamma_{3}\right) }+ \Delta W_{t_{n + \gamma}, t_{n + 1}}\sigma\left(1 - z\gamma_{2}\right) + I_{\left(1, 0\right), t_{n + \gamma}, t_{n + 1}}\sigma\lambda\right).
\end{array}
\end{equation*}

\noindent Taking the values of $\gamma_{2}$ and $\gamma_{3}$, it can be written as:
\begin{equation*}
Y_{n + 1} = G\left(z\right)Y_{n} + V_{n} + V_{n + \gamma},
\end{equation*}
\noindent where
\begin{equation*}
G\left(z\right) = \frac{\left(1 + \left(1 - \gamma\right)^{2}\right)z + 2\left(2 - \gamma\right)}{\gamma\left(1 - \gamma\right)z^{2} + \left(\gamma^{2} - 2\right)z + 2\left(2 - \gamma\right)},
\end{equation*}
\begin{equation*}
V_{n} = \Delta W_{t_{n}, t_{n + \gamma}}\sigma\frac{\left(1 - \gamma\right)^{2}z + \left(2 - \gamma\right)}{\left(\gamma - 1\right)z + \left(2 - \gamma\right)} + I_{\left(1, 0\right), t_{n}, t_{n + \gamma}}\sigma\lambda\frac{\left(1 - \gamma\right)^{2}z + 2\left(2 - \gamma\right)}{\gamma\left(1 - \gamma\right)z^{2} + \left(\gamma^{2} - 2\right)z + 2\left(2 - \gamma\right)},
\end{equation*}
\begin{equation*}
V_{n + \gamma} = \Delta W_{t_{n + \gamma}, t_{n + 1}}\sigma + I_{\left(1, 0\right), t_{n + \gamma}, t_{n + 1}}\sigma\lambda\frac{\left(2 - \gamma\right)}{\left(\gamma - 1\right)z + \left(2 - \gamma\right)}.
\end{equation*}

\noindent Iterating, we obtain
\begin{equation*}
Y_{n + 1} = \left(G\left(z\right)\right)^{n + 1}Y_{0} + \sum_{k = 0}^{n}\left(G\left(z\right)\right)^{n - k}\left(V_{k} + V_{k + \gamma}\right).
\end{equation*}

\noindent Therefore, the fact that the extra step $\gamma$ has been added does not change the analysis to be performed; and it is equivalent if we had the scheme written as in \eqref{recur}.

\noindent Studying the region of absolute stability of the stochastic TR-BDF2 scheme \eqref{trbdf2} is the same as studying its counterpart of the deterministic TR-BDF2 scheme \eqref{trbdf2deter}, which is $A$-stable. Thus concluding the proof.
\end{proof}

\subsection{$MS$-stability}
As mentioned above, for this case we consider the test equation \eqref{test2}:
\begin{equation*}
{\rm d}X_{t} = \lambda X_{t}{\rm d}t + \sigma X_{t}{\rm d}W_{t}.
\end{equation*}

This equation has a known analytical solution and is given by
\begin{equation*}
X_{t} = e^{\left(\lambda - \frac{\sigma^{2}}{2}\right)t + \sigma W_{t}}X_{0},
\end{equation*}
\noindent and the second moment is
\begin{equation}
E\left[X_{t}^{2}\right] = e^{\left(2\lambda + \sigma^{2}\right)t}X_{0}^{2}.
\label{19}
\end{equation}

As in the first stability case presented, we are interested in studying a particular scenario; namely when
\begin{equation}
2\Re\left(\lambda\right) + \left|\sigma\right|^{2} < 0.
\label{condicion}
\end{equation}

When this inequality holds, the second moment \eqref{19} tends to zero as $t$ goes to infinity, and we want to study when the numerical scheme will behave similarly under that condition. On the other hand, note that it also generalizes the deterministic case, wherein the inequality \eqref{condicion} would be $\Re\left(\lambda\right) < 0$ if $\sigma = 0$.

\begin{proposition}\label{prop}
There exists $h^{\ast} > 0$ such that for all $h < h^{\ast}$, the stochastic TR-BDF2 is $MS$-stable.
\end{proposition}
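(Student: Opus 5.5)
We apply the stochastic TR-BDF2 scheme \eqref{trbdf2} to the test equation \eqref{test2}, assuming \eqref{condicion}, and compute its amplification factor explicitly. For $a(t,x)=\lambda x$ and $b(t,x)=\sigma x$ ($m=1$, scalar) all coefficient functions are monomials in $x$. Concretely, $L^{0}a=\lambda^{2}x$, $L^{1}a=\lambda\sigma x$, $L^{1}L^{1}a=\lambda\sigma^{2}x$, and the stochastic part $S_{2}[Y_{n}]$ is $Y_{n}$ times a linear combination of the multiple It\^o integrals $I_{\alpha,t_{n},t_{n+\gamma}}$, $\alpha\in\mathcal{A}_{2}$ with $l(\alpha)\neq n(\alpha)$. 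Its coefficients are products of $\lambda$ and $\sigma$, e.g. $\sigma I_{(1)}+\sigma^{2}I_{(1,1)}+\lambda\sigma I_{(1,0)}+\sigma\lambda I_{(0,1)}+\sigma^{3}I_{(1,1,1)}+\dots$.

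The first stage, $Y_{n+\gamma}(1-\tfrac12 z\gamma)=Y_{n}\bigl(1+\tfrac12 z\gamma+\Pi_{n}\bigr)$ with $z=\lambda h$, gives $Y_{n+\gamma}=R_{1}Y_{n}$, where $R_{1}=(1+\tfrac12 z\gamma+\Pi_{n})/(1-\tfrac12 z\gamma)$ and $\Pi_{n}$ collects the random terms of $\hat P_{n}$. The second stage then gives
\begin{equation*}
Y_{n+1}=R\,Y_{n},\qquad R=\frac{\gamma_{3}R_{1}+(1-\gamma_{3})+\Theta_{n}+\Theta_{n+\gamma}R_{1}}{1-z\gamma_{2}},
\end{equation*}
with $\Theta_{n},\Theta_{n+\gamma}$ the random coefficients of $\hat Q_{n},\hat Q_{n+\gamma}$. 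This is exactly the form required in the definition of $MS$-stability. The denominators $1-\tfrac12 z\gamma$ and $1-z\gamma_{2}$ do not vanish for $\Re(\lambda)\le 0$. When $\Re\lambda>0$ they do not vanish for $h$ small, since $1-\tfrac12 z\gamma$ and $1-z\gamma_{2}$ are then close to $1$. So $R$ is well defined for $h<h^{\ast}_{0}$.

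Next we expand $E[|R|^{2}]$ in powers of $h$. The multiple integrals over disjoint subintervals are independent. Their second moments are $E[|I_{\alpha}|^{2}]=\mathcal{O}(h^{l(\alpha)+n(\alpha)})$, and we also use the mixed moments $E[I_{(1)}I_{(1,0)}]$, $E[I_{(1)}I_{(1,1,1)}]=0$, etc. from \cite{Kloeden1992}. Odd moments of the Wiener increments vanish. Collecting terms, $R=1+z+\sigma\Delta W_{t_{n},t_{n+1}}+\sigma^{2}I_{(1,1),t_{n},t_{n+1}}+\mathcal{R}$. This holds because the scheme reproduces the It\^o--Taylor expansion of $X_{t_{n+1}}/X_{t_{n}}$ to the order shown in the convergence theorem, and the remainder satisfies $E|\mathcal{R}|^{2}=\mathcal{O}(h^{2})$ and $E[\mathcal{R}]=\mathcal{O}(h^{2})$. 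Hence
\begin{equation*}
E\bigl[|R|^{2}\bigr]=1+\bigl(2\Re(\lambda)+|\sigma|^{2}\bigr)h+\mathcal{O}(h^{2}),
\end{equation*}
which matches the expansion of $e^{(2\Re\lambda+|\sigma|^{2})h}$ from \eqref{19}. A cleaner route to this expansion is a consistency argument. By the order-2 (in fact order $\ge 1$ suffices) local accuracy, $E|R-X_{t_{n+1}}/X_{t_{n}}|^{2}=\mathcal{O}(h^{2})$ and the one-step means agree to $\mathcal{O}(h^{2})$. Hence $E|R|^{2}-e^{(2\Re\lambda+|\sigma|^{2})h}=\mathcal{O}(h^{3/2})$, and the cross term can be handled by Cauchy--Schwarz combined with the mean estimate, giving $o(h)$.

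Under \eqref{condicion} the coefficient $2\Re(\lambda)+|\sigma|^{2}$ is strictly negative. Choosing $h^{\ast}\le h^{\ast}_{0}$ so small that the $o(h)$ remainder is below $\tfrac12|2\Re\lambda+|\sigma|^{2}|\,h$ then gives $E[|R|^{2}]<1$ for all $h<h^{\ast}$. The main obstacle is bookkeeping the $\mathcal{O}(h)$ coefficient exactly. Every term of order $h^{1/2}$ and $h$ in $R$ (from $\hat P_{n}$, $\hat Q_{n}$, $\hat Q_{n+\gamma}$, and the product $\Theta_{n+\gamma}R_{1}$) must be tracked, including the $\gamma_{3}$ weights. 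One must check that the noise contributions combine into $\sigma\Delta W_{t_{n},t_{n+1}}$ with weight exactly one, using $\gamma_{3}+(1-\gamma_{3})=1$ and the independence of the two increments. The resulting variance is $|\sigma|^{2}(\gamma h+(1-\gamma)h)=|\sigma|^{2}h$. We would verify this coefficient directly before relying on the consistency shortcut.
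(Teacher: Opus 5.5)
Your proposal is correct and follows essentially the same route as the paper. Both arguments establish $E[|R|^2]-1=(2\Re(\lambda)+|\sigma|^2)h+\mathcal{O}(h^2)$ and then use the strict negativity of this leading coefficient under \eqref{condicion} to conclude $E[|R|^2]<1$ for small $h$. The paper reads the coefficient off an explicitly computed polynomial, implicitly after factoring out $h$ and clearing the nonvanishing implicit denominators; your version makes both of those steps explicit and obtains the coefficient from the mean $1+z+\mathcal{O}(h^2)$ and the noise variance $|\sigma|^2h$ instead.
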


\begin{proof}
When we calculate $E\left[\left|R\left(h, \lambda, \sigma, I_{\alpha}\right)\right|^{2}\right] - 1$ for the stochastic TR-BDF2 method, it becomes a polynomial, denoted by $P$, of degree seven in the variable $h$, so when $h$ tends to zero, $P$ converges to the independent term, which in this case is $2\Re\left(\lambda\right) + \left|\sigma\right|^{2}$.

\noindent Applying the definition of continuity, we obtain that, for all $\varepsilon > 0$, there exists $\delta \coloneqq h^{\ast} > 0$ such that, for all $h \in \left[0, h^{\ast}\right)$, the following inequality holds
\begin{equation*}
\left|P\left(h\right) - \left(2\Re\left(\lambda\right) + \left|\sigma\right|^{2}\right)\right| < \varepsilon,
\end{equation*}
\noindent or, equivalently,
\begin{equation*}
- \varepsilon + 2\Re\left(\lambda\right) + \left|\sigma\right|^{2} < P\left(h\right) < \varepsilon + 2\Re\left(\lambda\right) + \left|\sigma\right|^{2}.
\end{equation*}

\noindent Then, if we take $\varepsilon < - \left(2\Re\left(\lambda\right) + \left|\sigma\right|^{2}\right)$, $P\left(h\right) < 0$, $\forall h \in \left[0, h^{\ast}\right)$ and consequently the scheme is $MS$-stable.
\end{proof}

As mentioned in the introduction of this section, many schemes require a very small time step $h$ to achieve accurate performance. In our case, the above result states that the stochastic TR-BDF2 scheme performs well, in the $MS$-stability sense, for a certain $h < h^{\ast}$.

In Figure~\ref{contour}, we present the values of $h^{\ast}$ for the test equation \eqref{test2} with parameters $\Re\left(\lambda\right) \in \left[- 200, 0\right)$ and $\left|\sigma\right| \in \left[0, 20\right)$, calculated numerically using the stochastic TR-BDF2 scheme \eqref{trbdf2}.

\begin{figure}[ht]
\centering
\includegraphics[scale=0.41]{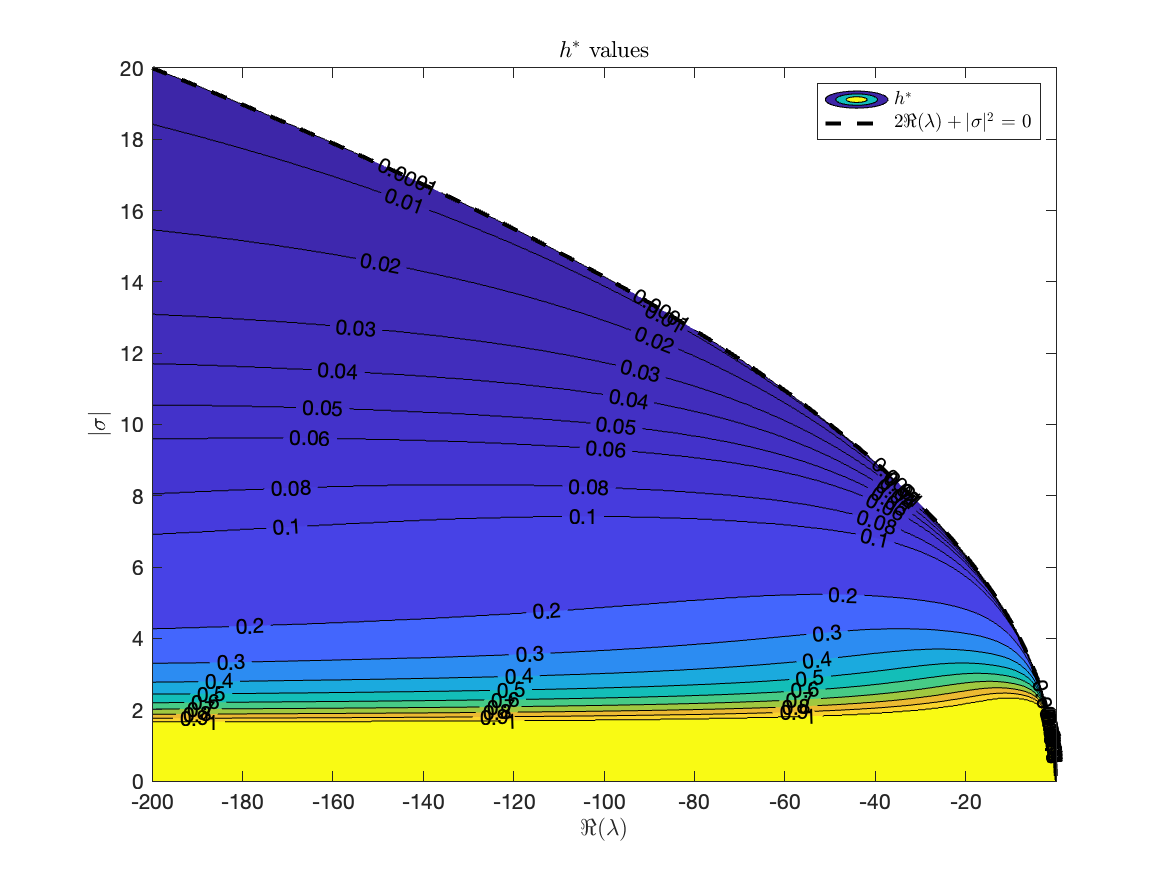}
\caption{$h^{\ast}$ values for the stochastic TR-BDF2 scheme.}\label{contour}
\end{figure}

It can be seen that the values of $h^{\ast}$ are acceptable in general: very large with small noise $\left(\left|\sigma\right| < 2\right)$ and decreasing as the parameters are closer to condition limit $2\Re\left(\lambda\right) + \left|\sigma\right|^{2} = 0$, would only be very small and therefore unfeasible when it is very close to the boundary condition.

We consider the case of the It{\^o}--Taylor method of order 2. In this situation the reasoning of Proposition~\ref{prop} can be exactly reproduced with the only difference that here the polynomial is of degree 3 and therefore the scheme is also $MS$-stable.

However, if we consider the second greatest term of the polynomials when $h$ tends to zero, the lineal one, for the It{\^o}--Taylor approximation of order 2 we have
\begin{equation*}
\frac{1}{2}\left(2\Re\left(\lambda\right) + \left|\sigma\right|^{2}\right)^{2},
\end{equation*}
\noindent while for the stochastic TR-BDF2
\begin{equation*}
\frac{1}{2 - \gamma}\left(2\Re\left(\lambda\right)\gamma\left(\gamma - 1\right) + \left|\sigma\right|^{2}\left(2 - \gamma\right)\right)\left(2\Re\left(\lambda\right) + \left|\sigma\right|^{2}\right).
\end{equation*}

The It{\^o}--Taylor one is positive, while the second is negative. This means that, there exists $h^{\ast\ast} > 0$ such that for all $h \in \left[h^{\ast\ast}, h^{\ast}\right)$, the stochastic TR-BDF2 is $MS$-stable, whilst the It{\^o}--Taylor approximation of order 2 is not.

In the following section, we will study a stiff example where this phenomenon occurs. Also, Figure~\ref{IT2} shows how, indeed, the values of $h^{\ast}$ are smaller across the entire graph, meaning a smaller time step would be needed to obtain good results.

\begin{figure}[ht]
\centering
\includegraphics[scale=0.41]{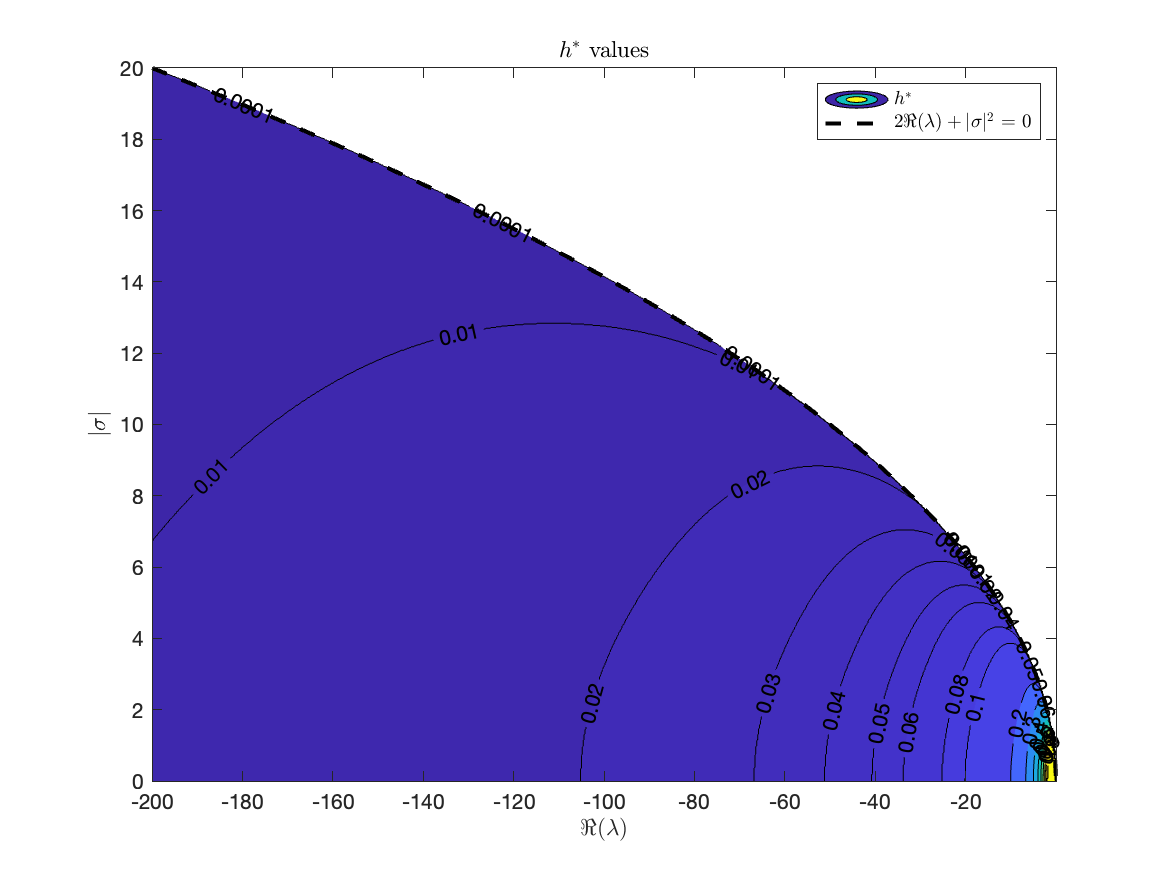}
\caption{$h^{\ast}$ values for the stochastic It{\^o}--Taylor of order 2 scheme.}\label{IT2}
\end{figure}

\section{Numerical validation}\label{numerical}
In this section we want to validate the theoretical results of the previous sections. On the one hand, we want to check the convergence orders of the stochastic TR-BDF2 and the different methods presented and denoted by TR-BDF2$^{r}$ (Subsection~\ref{sec41}) and, on the other hand, the behavior against another method of order 2 in the case of stiff problems. For this reason, we have selected three academic examples that are sufficiently illustrative to verify numerically the results that were established theoretically. For such examples, it is not necessary to use a method of order 2 with the stability properties of our schemes. However, our goal is to apply it in future work to problems where a more accurate approximation of the solution is needed, or where good stability properties are essential, such as in stiff problems.

To implement the method, it is necessary to compute all the stochastic integrals up to second order, which involves a laborious implementation. However, the same integrals appear in the It{\^o}--Taylor approximation of order 2, so we follow the techniques developed by Kloeden \cite{Kloeden1992}. For this reason, we plan to study how to optimise the computation of these stochastic integrals in future work, including, for example, the computation of L{\'e}vy areas, as done in \cite{Lord2014}.

Before starting the presentation of the results, we are going to detail some technical aspects that have been necessary for the implementation of the resolution algorithm:

\begin{itemize}
\item All tests were programmed using MATLAB$\_$R2022b software on a personal computer MacBook Air (M1, 8GB, 2020).

\item As this is an implicit method in each of its steps, it is necessary to use a method for solving non-linear equations. In this case, the Newton-Raphson method has been chosen because of its order 2 behavior in the error. If a lower order method is used, it may influence the order estimates of the total method.

\item In order to calculate the $L^{2}$ error numerically to check the algorithms used, we shall use the usual estimation presented in \cite{Lord2014}:
\begin{equation*}
\left\|X_{T} - Y_{N}\right\|_{L^{2}\left(\Omega, \mathbb{R}^{d}\right)} \simeq \left(\frac{1}{M}\sum_{j = 1}^{M}\left\|X_{T}^{j} - Y_{N}^{j}\right\|^{2}\right)^{\frac{1}{2}} \eqqcolon \varepsilon_{h},
\end{equation*}
\noindent where we approximate the error with the mean value error at the final time of $M$ simulations of sample paths of the Wiener process, fixed the time step $h$.

\item To estimate the convergence order, we compute the following expression for two values of $h$:
\begin{equation*}
p \simeq \frac{\log\left(\frac{\varepsilon_{h_{2}}}{\varepsilon_{h_{1}}}\right)}{\log\left(\frac{h_{2}}{h_{1}}\right)}.
\end{equation*}

In the particular case when $h_{1} = \frac{h}{2}$ and $h_{2} = h$ and the partition is uniform, we have
\begin{equation*}
p \simeq \log_{2}\left(\frac{\varepsilon_{h}}{\varepsilon_{\frac{h}{2}}}\right) \eqqcolon \rho_{h}.
\end{equation*}
\end{itemize}

\subsection{Test 1: Validation of convergence order}
To perform this validation we consider a regular problem where an analytical expression of the solution is known. Concretely, we consider the following SDE
\begin{equation*}
\left\{\begin{array}{ll}
\displaystyle {\rm d}X_{t} = - \beta^{2}X_{t}\left(1 - X_{t}^{2}\right){\rm d}t + \beta\left(1 - X_{t}^{2}\right){\rm d}W_{t}, & \displaystyle t \in \left[0, T\right],\\
\displaystyle X_{0} \text{ is given}, & \displaystyle
\end{array}\right.
\end{equation*}
\noindent where $\beta$ is a parameter and whose exact solution is given by
\begin{equation*}
X_{t} = \frac{\left(1 + X_{0}\right)e^{2\beta W_{t}} + X_{0} - 1}{\left(1 + X_{0}\right)e^{2\beta W_{t}} + 1 - X_{0}}.
\end{equation*}

We choose, the following values for the parameters: $h = 2^{- i}$, with $i = 3, \ldots, 10$, $\beta = \frac{1}{2}$, $X_{0} = \frac{1}{2}$, $T = 1$, $d = 1$, $m = 1$ and $M = 10000$.

\begin{figure}[ht]
\centering
\includegraphics[scale=0.41]{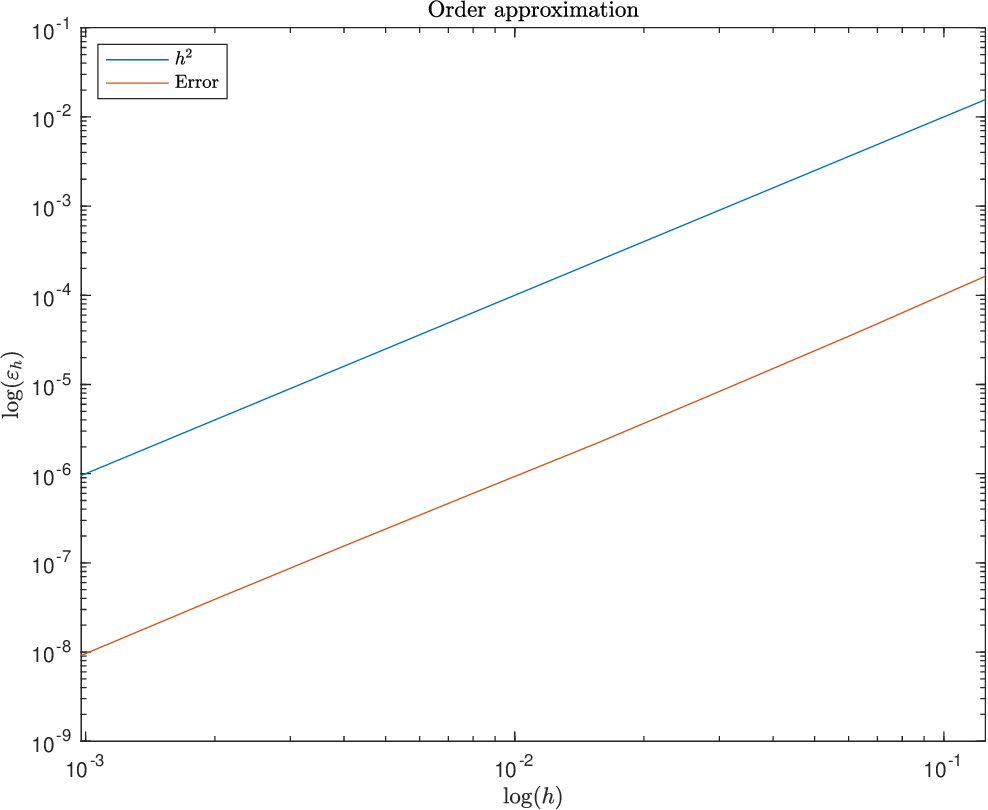}
\caption{Slope of the error of the stochastic TR-BDF2.}\label{fig1}
\end{figure}

Figure~\ref{fig1} shows the slope of the error curves for the method TR-BDF2. We obtain an excellent agreement with the theoretical predictions.

The following table shows the approximation of the convergence error for the different TR-BDF2$^{r}$ methods.

\begin{table}[ht]
\centering
\caption{Approximation of the order for the stochastic TR-BDF2 and the schemes TR-BDF2$^{r}$.}\label{w/o}
\begin{tabular}{@{}llllllll@{}}
\hline
Method & $\rho_{2^{- 3}}$ & $\rho_{2^{- 4}}$ & $\rho_{2^{- 5}}$ & $\rho_{2^{- 6}}$ & $\rho_{2^{- 7}}$ & $\rho_{2^{- 8}}$ & $\rho_{2^{- 9}}$\\ \hline
TR-BDF2$^{1}$ & 0.9992 & 1.0065 & 0.9924 & 1.0039 & 0.9836 & 0.9846 & 1.0160\\
TR-BDF2$^{2}$ & 1.6139 & 1.5487 & 1.5343 & 1.4640 & 1.5127 & 1.5082 & 1.5144\\
TR-BDF2$^{3}$ & 1.0117 & 1.0028 & 0.9914 & 1.0033 & 0.9839 & 0.9845 & 1.0158\\
TR-BDF2$^{4}$ & 1.6811 & 1.6105 & 1.5567 & 1.4803 & 1.5199 & 1.5116 & 1.5185\\
TR-BDF2$^{5}$ & 1.0295 & 1.0178 & 1.0121 & 0.9904 & 0.9849 & 1.0348 & 0.9653\\
TR-BDF2$^{6}$ & 1.5776 & 1.5465 & 1.5004 & 1.5135 & 1.5165 & 1.5119 & 1.4933\\
TR-BDF2 & 2.1159 & 2.0553 & 2.0343 & 1.9474 & 1.9636 & 1.9873 & 2.0202\\
\hline
\end{tabular}
\end{table}

As we can observe in Table~\ref{w/o} only the TR-BDF2 method developed with all its terms is the one that maintains the order 2 as in the deterministic one. In all the other cases we see that the order obtained is smaller.

\subsection{Test 2: Stiff problem}
In this test we will compare the developed method with the It{\^o}--Taylor method of order 2, for a stiff problem, in order to see the time step constraints to obtain an acceptable error accuracy.

Again, we consider a problem with a known solution but in this case with stiff behavior. We consider the following linear SDE
\begin{equation*}
\left\{\begin{array}{ll}
\displaystyle {\rm d}X_{t} = AX_{t}{\rm d}t + BX_{t}{\rm d}W_{t}, & \displaystyle t \in \left[0, T\right],\\
\displaystyle X_{0} \text{ is given}, & \displaystyle
\end{array}\right.
\end{equation*}
\noindent whose exact solution is given by
\begin{equation*}
X_{t} = e^{\left(A - \frac{1}{2}B^{2}\right)t + BW_{t}}X_{0}.
\end{equation*}

We take for this problem
\begin{equation*}
A = \begin{pmatrix}
\displaystyle - a & \displaystyle a\\
\displaystyle a & \displaystyle - a
\end{pmatrix} \text{ and } B = \begin{pmatrix}
\displaystyle b & \displaystyle 0\\
\displaystyle 0 & \displaystyle b
\end{pmatrix}.
\end{equation*}

The Lyapunov exponents of the stochastic differential equation of this example are $\lambda_{1} = - \frac{1}{2}b^{2}$ and $\lambda_{2} = - \frac{1}{2}b^{2} - 2a$. The stiffness in the stochastic sense for this particular case will occur if $\lambda_{1} \gg \lambda_{2}$, for $a > 0$.

For this, we choose $a = 15$, $b = \frac{1}{2}$, $X_{0} = \begin{pmatrix}
\displaystyle 1, & \displaystyle 0
\end{pmatrix}^{T}$, $T = 1$ and $M = 10000$. Table~\ref{stiff} shows the errors calculated using both It{\^o}--Taylor approximation of order 2 and the stochastic TR-BDF2. It can be seen how, in this stiff example, a small time step is required for the first one to obtain good results, which is not true for the other one; in the case of TR-BDF2 for all values a small error is obtained which decreases as a function of step $h$ as expected.

\begin{table}[ht]
\centering
\caption{Errors for the stochastic TR-BDF2 and the It{\^o}--Taylor approximation of order 2.}\label{stiff}
\begin{tabular}{@{}llllllll@{}}
\hline
Method & $\varepsilon_{2^{- 1}}$ & $\varepsilon_{2^{- 2}}$ & $\varepsilon_{2^{- 3}}$ & $\varepsilon_{2^{- 4}}$ & $\varepsilon_{2^{- 5}}$ & $\varepsilon_{2^{- 6}}$ & $\varepsilon_{2^{- 7}}$\\ \hline
It{\^o}--Taylor & 6876.8 & 1.567 e$^{5}$ & 84135 & 0.109 & 2.198 e$^{- 6}$ & 5.407 e$^{- 7}$ & 1.421 e$^{- 7}$\\
TR-BDF2 & 0.025 & 0.001 & 1.209 e$^{- 5}$ & 3.423 e$^{- 6}$ & 1.079 e$^{- 6}$ & 3.5 e$^{- 7}$ & 1.174 e$^{- 7}$\\
\hline
\end{tabular}
\end{table}

Taking the same values with $d = 5$ and matrices $A$ and $B$ as
\begin{equation*}
A_{i, j} = \left(- 1\right)^{i + j - 1}a \text{ and } B = bI_{5},
\end{equation*}
\noindent where $I_{d}$ is the identity matrix of order $d$, we have that $\lambda_{1} = - \frac{1}{2}b^{2}$ and $\lambda_{5} = - \frac{1}{2}b^{2} - 5a$ as before. We show the results in Table~\ref{stiff2}.

\begin{table}[ht]
\centering
\caption{Errors for the stochastic TR-BDF2 and the It{\^o}--Taylor approximation of order 2.}\label{stiff2}
\begin{tabular}{@{}llllllll@{}}
\hline
Method & $\varepsilon_{2^{- 1}}$ & $\varepsilon_{2^{- 2}}$ & $\varepsilon_{2^{- 3}}$ & $\varepsilon_{2^{- 4}}$ & $\varepsilon_{2^{- 5}}$ & $\varepsilon_{2^{- 6}}$ & $\varepsilon_{2^{- 7}}$\\ \hline
It{\^o}--Taylor & 1.988 e$^{5}$ & 2.795 e$^{8}$ & 1.154 e$^{12}$ & 2.991 e$^{13}$ & 25345 & 6.84 e$^{- 7}$ & 1.798 e$^{- 7}$\\
TR-BDF2 & 0.006 & 0.0003 & 1.537 e$^{- 5}$ & 4.33 e$^{- 6}$ & 1.365 e$^{- 6}$ & 4.427 e$^{- 7}$ & 1.485 e$^{- 7}$\\
\hline
\end{tabular}
\end{table}

We can conclude that the developed method is quite suitable for solving stiff problems without having to take an extremely small step $h$.

\subsection{Test 3: General problem}
In this test we will compare the developed method with the It{\^o}--Taylor method of order 2, for a general stochastic differential equation. We consider the $d \times d$ fundamental matrix $\Phi_{t}$ that satisfies $\Phi_{0} = I_{d}$ and the homogeneus matrix stochastic differential equation

\begin{equation*}
\left\{\begin{array}{ll}
\displaystyle {\rm d}\Phi_{t} = A\Phi_{t}{\rm d}t + \sum_{j = 1}^{m}B^{j}\Phi_{t}{\rm d}W_{t}^{j}, & \displaystyle t \in \left[0, T\right],\\
\displaystyle \Phi_{0} = I_{d}, & \displaystyle
\end{array}\right.
\end{equation*}
\noindent which we interpret column vector by column vector as vector stochastic differential equations and whose exact solution is given by
\begin{equation*}
\Phi_{t} = e^{\left(A - \frac{1}{2}\sum_{j = 1}^{m}\left(B^{j}\right)^{2}\right)t + \sum_{j = 1}^{m}B^{j}W_{t}^{j}},
\end{equation*}
\noindent if the matrices $A, B^{1}, B^{2}, \ldots, B^{m}$ are constants and commute, that is if $AB^{j} = B^{j}A$ and $B^{j_{1}}B^{j_{2}} = B^{j_{2}}B^{j_{1}}$ for all $j, j_{1}, j_{2} = 1, 2, \ldots, m$.

We choose: $h = 2^{- i}$, with $i = 1, \ldots, 7$, $T = 1$, $d = 2$, $m = 2$ and $M = 10000$,
\begin{equation*}
A = - 15\begin{pmatrix}
\displaystyle 1 & \displaystyle 0\\
\displaystyle 0 & \displaystyle 1
\end{pmatrix}, B^{1} = \frac{1}{4}\begin{pmatrix}
\displaystyle 2 & \displaystyle 0\\
\displaystyle 0 & \displaystyle 0
\end{pmatrix} \text{ and } B^{2} = \frac{1}{4}\begin{pmatrix}
\displaystyle 0 & \displaystyle 0\\
\displaystyle 0 & \displaystyle 1
\end{pmatrix}.
\end{equation*}

We rewrite the problem as a $4$-dimensional SDE with a $2$-dimensional noise as follows:
\begin{equation*}
\left\{\begin{array}{ll}
\displaystyle {\rm d}X_{t} = - 15X_{t}{\rm d}t + \frac{1}{2}\begin{pmatrix}
\displaystyle I_{2} & \displaystyle 0_{2 \times 2}\\
\displaystyle 0_{2 \times 2} & \displaystyle 0_{2 \times 2}
\end{pmatrix}X_{t}{\rm d}W_{t}^{1} + \frac{1}{4}\begin{pmatrix}
\displaystyle 0_{2 \times 2} & \displaystyle 0_{2 \times 2}\\
\displaystyle 0_{2 \times 2} & \displaystyle I_{2}
\end{pmatrix}X_{t}{\rm d}W_{t}^{2}, & \displaystyle t \in \left[0, T\right],\\
\displaystyle X_{0} = \begin{pmatrix}
\displaystyle 1, & \displaystyle 0, & \displaystyle 0, & \displaystyle 1
\end{pmatrix}^{T}, & \displaystyle
\end{array}\right.
\end{equation*}
\noindent where the matrix of the diffusion is given by blocks and $0_{i \times j}$ the null matrix with $i$ rows and $j$ columns.

In Table~\ref{stiff3} we show the phenomena described in the above Subsection and we calculate an approximation of the order as in Table~\ref{w/o} for the stochastic TR-BDF2 in Table~\ref{w/o3}.

\begin{table}[ht]
\centering
\caption{Errors for the stochastic TR-BDF2 and the It{\^o}--Taylor approximation of order 2.}\label{stiff3}
\begin{tabular}{@{}llllllll@{}}
\hline
Method & $\varepsilon_{2^{- 1}}$ & $\varepsilon_{2^{- 2}}$ & $\varepsilon_{2^{- 3}}$ & $\varepsilon_{2^{- 4}}$ & $\varepsilon_{2^{- 5}}$ & $\varepsilon_{2^{- 6}}$ & $\varepsilon_{2^{- 7}}$\\ \hline
It{\^o}--Taylor & 666.45 & 490.92 & 0.561 & 2.256 e$^{- 5}$ & 5.257 e$^{- 7}$ & 8.05 e$^{- 8}$ & 1.732 e$^{- 8}$\\
TR-BDF2 & 0.063 & 0.0004 & 4.601 e$^{- 7}$ & 2.185 e$^{- 7}$ & 6.156 e$^{- 8}$ & 1.565 e$^{- 8}$ & 3.906 e$^{- 9}$\\
\hline
\end{tabular}
\end{table}

\begin{table}[ht]
\centering
\caption{Approximation of the order for the stochastic TR-BDF2.}\label{w/o3}
\begin{tabular}{@{}llll@{}}
\hline
Method & $\rho_{2^{- 4}}$ & $\rho_{2^{- 5}}$ & $\rho_{2^{- 6}}$\\ \hline
TR-BDF2 & 1.8279 & 1.9755 & 2.0025\\
\hline
\end{tabular}
\end{table}

\paragraph*{Acknowledgement}
The authors would like to thank Prof. Luca Bonaventura, Politecnico di Milano, for his suggestions, comments and discussions that have allowed the improvement of this work. This document is the result of the research project funded by the Andalusian Government under grant PREDOC-PAID2020 and the Spanish Ministerio de Ciencia e Innovaci{\'o}n, Agencia Estatal de Investigaci{\'o}n (AEI) and FEDER under projects PID2024-156228NB-I00 and PID2021-123153OB-C21.

\bibliographystyle{plain}
\bibliography{main}
\end{document}